\documentclass[12pt, reqno]{amsart}
\makeatletter
\@namedef{subjclassname@1991}{$\mathrm{1991}$ Mathematics Subject Classification}
\@namedef{subjclassname@2000}{$\mathrm{2000}$ Mathematics Subject Classification}
\@namedef{subjclassname@2010}{$\mathrm{2010}$ Mathematics Subject Classification}
\@namedef{subjclassname@2020}{$\mathrm{2020}$ Mathematics Subject Classification}
\makeatother
\usepackage{amsmath,amsthm, amscd, amsfonts, amssymb, graphicx, color}
\usepackage[bookmarksnumbered, colorlinks, plainpages,linkcolor=blue,urlcolor=blue,citecolor=blue]{hyperref}
\usepackage{amsmath}
\usepackage{amssymb}
\newtheorem{thm}{Theorem}[section]
\newtheorem{cor}[thm]{Corollary}
\newtheorem{lem}[thm]{Lemma}

\newtheorem{exam}[thm]{Example}
\numberwithin{equation}{section}

\begin{document}

\title{The generalized core-EP inverse of operator matrices with generalized Schur complements}

\author{Huanyin Chen}
\address{School of Big Data, Fuzhou University of International Studies and Trade, Fuzhou 350202, China}
\email{<huanyinchenfz@163.com>}

\subjclass[2020]{16U50, 15A09, 46H05.}\keywords{core inverse; core-EP inverse; generalized core-EP inverse; block operator matrix; Banach algebra.}

\begin{abstract} In this paper, we investigate the generalized core-EP inverse of block operator matrices over a Banach algebra.
As a result, novel representations and explicit formulae for the core and core-EP inverses of the $2\times 2$ block operator matrices are established.
\end{abstract}

\maketitle

\section{Introduction}

A Banach $*$-algebra is a Banach algebra $\mathcal{A}$ endowed with an involution $*: \mathcal{A} \to \mathcal{A}$ satisfying $(x^*)^* = x$, $(x+y)^* = x^* + y^*$, $(\lambda x)^* = \bar{\lambda}x^*$, $(xy)^* = y^*x^*$, and $\|x^*\| = \|x\|$ for all $x, y \in \mathcal{A}$ and $\lambda \in \mathbb{C}$.
Let $\mathcal{A}$ be a complex Banach algebra and let $a \in \mathcal{A}$. An element $x \in \mathcal{A}$ is called the group inverse of $a$ (denoted $a^\#$) if it satisfies: $ax^2=x, ax=xa, xa^2=a.$ An element $x \in \mathcal{A}$ is the Drazin inverse of $a$ (denoted $a^D$) if there exists a non-negative integer $k$ such that $ax^2=x, ax=xa, xa^{k+1}=a^k.$ An element $x \in \mathcal{A}$ is the generalized Drazin inverse of $a$ (denoted $a^d$) if
$ax^2=x, ax=xa, a-xa^2\in \mathcal{A}^{\mathrm{qnil}}$. Here,
$\mathcal{A}^{\mathrm{qnil}}$ denotes the set of quasinilpotent elements of $\mathcal{A}$. As is well known,
\[
\mathcal{A}^{\mathrm{qnil}} = \left\{ x \in \mathcal{A} \mid \lim_{n\to \infty} \|x^n\|^{\frac{1}{n}} = 0 \right\}.
\] These generalized inverses play important roles in matrix and operator theory (see~\cite{C,C1,CV,CD,D,D1,MD,W,ZM}).

Let $\mathcal{A}$ be a Banach *-algebra. An element $x \in \mathcal{A}$ is the core inverse of $a$ (denoted $a^{\tiny\textcircled{\#}}$) if $$ax^2=x, (ax)^*=ax, xa^2=a.$$ An element $x \in \mathcal{A}$ is the core-EP inverse of $a$ (denoted by $a^{\tiny\textcircled{D}}$) if there exists a non-negative integer $k$ such that $$ax^2=x, (ax)^*=ax, xa^{k+1}=a^k$$ (see~\cite{GC}). Following Chen and Sheibani, an element $x \in \mathcal{A}$ is the generalized core-EP inverse of $a$ (denoted $a^{\tiny\textcircled{d}}$) if
$$ax^2=x, (ax)^*=ax, \lim\limits_{n\to \infty}||a^n-xa^{n+1}||^{\frac{1}{n}}=0$$ (se~\cite{C3,C4}). This generalizes the core-EP inverse from matrices to infinite-dimensional Banach space operators.

The core and core-EP inverses of the matrices have attracted considerable attention. Xu ~\cite{XS} considered the core inverse of a special form of triangular matrices, while Gao and Chen ~\cite{GC2} provided investigations of the core-EP inverse of a triangular complex matrix. It is worth noting that a triangular matrix may fail to have a core-EP inverse even when its diagonal entries do ~\cite[Example 1]{GC2}, highlighting the complexity of this problem. Recent effort have also explored the core-EP inverse of companion matrices ~\cite{GC3}. In ~\cite{C5}, Chen and Sheibani investigated the generalized core-EP inverse for triangular matrices over a $C^*$-algebra. For further results on these generalized inverses, we refer the reader to ~\cite{??}.

Let $$x=\left(
\begin{array}{cc}
a&b\\
c&d
\end{array}
\right)\in M_2(\mathcal{A}),$$ where $a\in \mathcal{A}^{\tiny\textcircled{d}}$, and let $s=d-ca^{\tiny\textcircled{d}}b\in \mathcal{A}^{\tiny\textcircled{d}}$ be the generalized Schur complement of $a$ in $x$.
This aim of this paper is to investigate the generalized core-EP inverse for several novel classes of the operator matrix $x$ with the generalized Schur complement over a Banach algebra. Consequently, new representations of the core and core-EP inverses of some novel block operator matrices are derived.

Throughout this paper, all Banach $*$-algebras are assumed to be complex and unital. We denote by $A^{D}, A^d, A^{\tiny\textcircled{\#}}, A^{\tiny\textcircled{D}}$, and $A^{\tiny\textcircled{d}}$ the sets of all Drazin invertible, generalized Drazin invertible, core invertible, core-EP invertible, and generalized core-EP invertible elements in $\mathcal{A}$, respectively. Let $\mathbb{C}^{n \times n}$ denote the Banach algebra of all $n \times n$ complex matrices, equipped with the conjugate transpose as its involution. For an element $a \in \mathcal{A}$ possessing a generalized Drazin inverse, we denote its spectral idempotent $1-aa^d$ by $a^{\pi}$.

\section{key lemmas}

For subsequent use, we require the following lemmas.

\begin{lem} Let $a\in \mathcal{A}^{\tiny\textcircled{d}}$ and $b\in \mathcal{A}^{qnil}$. If $ba=0$, then
$a+b\in \mathcal{A}^{\tiny\textcircled{d}}$. In this case, $$(a+b)^{\tiny\textcircled{d}}=a^{\tiny\textcircled{d}}.$$\end{lem}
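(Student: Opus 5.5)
Let $x=a^{\tiny\textcircled{d}}$. The plan is to verify directly that $x$ satisfies the three defining conditions of the generalized core-EP inverse of $a+b$. The first two conditions only involve $x$ and $ax$, so the work is to show that $(a+b)x=ax$ and then to control $(a+b)^n-x(a+b)^{n+1}$.

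\textbf{Step 1: $bx=0$.} From $ax^2=x$ we get $x=ax^2=a(ax^2)x=\cdots=a^nx^{n+1}$ for every $n\geq 1$. Hence $bx=ba\,a^{n-1}x^{n+1}=0$, because $ba=0$. Consequently
\[
(a+b)x=ax,\qquad (a+b)x^2=ax^2=x,\qquad ((a+b)x)^*=(ax)^*=ax=(a+b)x .
\]
So the first two conditions hold at once.

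\textbf{Step 2: expansion of powers.} Since $ba=0$, every word in $a,b$ in which $b$ is immediately followed by $a$ vanishes. Therefore
\[
(a+b)^n=\sum_{i=0}^{n} b^i a^{n-i}=a^n+\sum_{i=1}^{n-1} b^i a^{n-i}+b^n .
\]
In each surviving word the $a$'s come first and the $b$'s last. Left multiplication by $x$ kills every term containing a $b$ on the left, since $xb$ need not vanish but $bx$ does; we handle it as follows. We have $x(a+b)^{n+1}=x\sum_{i=0}^{n+1} b^ia^{n+1-i}$, and the terms with $i\ge1$ are $xb^ia^{n+1-i}$. Writing $x=ax^2$ gives $xb^i=ax^2b^i$, which does not obviously vanish. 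So it is better to use the other ordering: $(a+b)^n=\sum_{i=0}^n b^ia^{n-i}$, where $b^i$ stands on the left.

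\textbf{Step 3: a better reformulation.} Rather than handle those leftover terms directly, I would use the known characterization from \cite{C3,C4}. An element $a$ has a generalized core-EP inverse if and only if $a=a_1+a_2$ with $a_1\in\mathcal{A}^{\tiny\textcircled{\#}}$, $a_2\in\mathcal{A}^{qnil}$, $a_1^*a_2=0$ and $a_2a_1=0$, and in that case $a^{\tiny\textcircled{d}}=a_1^{\tiny\textcircled{\#}}$. Take the decomposition $a=a_1+a_2$ with $a_1=aax$-type core part, i.e. $a_1=axa$ and $a_2=a-axa$. Then $a+b=a_1+(a_2+b)$.

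\begin{itemize}
\item We have $b a_1=baxa=0$, and $a_1^*b=a^*(ax)b$. This need not vanish directly, so I would instead verify the limit condition for $x$ by hand.
\item We have $(a+b)^n-x(a+b)^{n+1}=(1-x(a+b))(a+b)^n$. Expanding and using $ba=0$, the terms reduce to $(a^n-xa^{n+1})$, plus terms $b^n-xab^n$ and mixed terms $b^ia^{n-i}$ with $1\le i\le n$, multiplied on the left by $(1-xa-xb)$.
\end{itemize}

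\textbf{Step 4: the norm estimate, the main obstacle.} The crux is showing that $\|(1-x(a+b))(a+b)^n\|^{1/n}\to0$. The mixed terms $b^ia^{n-i}$ must be split according to whether $i\ge n/2$, using $\|b^i\|^{1/i}\to0$, or $n-i\ge n/2$. In the latter case one needs the $a$-part to behave well, writing $a^{k}=xa^{k+1}+(a^k-xa^{k+1})$. The part $xa^{k+1}$ is annihilated on the left by $b$ only after commuting, and this is the delicate point. I would try the identity $b\,xa^{k+1}=0$ (from Step 1), which kills exactly the terms with $i\ge1$ whose $a$-block is replaced by $xa^{k+1}$. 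What remains are sums of $\|b^i\|\,\|a^{n-i}-xa^{n-i+1}\|$, with $n+1$ terms and both factors of geometric decay rate zero. A standard $\varepsilon$-argument, using $\|b^i\|\le C\varepsilon^i$ and $\|a^j-xa^{j+1}\|\le C\varepsilon^j$, then gives a bound $C'(n+1)\varepsilon^n$, whence the limit is $0$. This yields $(a+b)^{\tiny\textcircled{d}}=x=a^{\tiny\textcircled{d}}$.
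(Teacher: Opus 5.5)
Your expansion in Step 2 is wrong, and Step 4 is built on it. Since $ba=0$, a word in $a,b$ survives only if no $b$ is immediately followed by an $a$. So all $a$'s precede all $b$'s, as you say in words, and
\[
(a+b)^n=\sum_{j=0}^{n}a^{j}b^{n-j},
\]
not $\sum_{i}b^{i}a^{n-i}$. In your sum every term with $1\le i\le n-1$ contains $ba$ and vanishes, so your formula amounts to $(a+b)^n=a^n+b^n$. That is false: for $a=\begin{pmatrix}1&0\\0&0\end{pmatrix}$ and $b=\begin{pmatrix}0&1\\0&0\end{pmatrix}$ one has $(a+b)^2=a+b\neq a=a^2+b^2$. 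As a result, the device in Step 4 (split $a^{n-i}=xa^{n-i+1}+(a^{n-i}-xa^{n-i+1})$ and kill $b^{i}xa^{n-i+1}$ by $bx=0$) operates on terms that are already zero. The genuinely nonzero mixed terms $a^{j}b^{n-j}$, $1\le j\le n-1$, such as $ab=b$ in the example, are never estimated. So the third defining condition is not proved; $bx=0$ is only what you need for the first two conditions.

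The missing step is to pair each $a^kb^{n-k}$ in $(a+b)^n$ with the term $xa^{k+1}b^{n-k}$ of $x(a+b)^{n+1}=xb^{n+1}+\sum_{k=0}^{n}xa^{k+1}b^{n-k}$. This gives
\[
(a+b)^n-x(a+b)^{n+1}=\sum_{k=0}^{n}\bigl(a^{k}-xa^{k+1}\bigr)b^{n-k}-xb^{n+1}\qquad(a^0=1).
\]
Given $\varepsilon>0$, choose $C$ with $\|a^k-xa^{k+1}\|\le C\varepsilon^k$ and $\|b^m\|\le C\varepsilon^m$ for all $k,m\ge 0$. The norm is then at most $(n+1)C^2\varepsilon^n+C\|x\|\varepsilon^{n+1}$, so the $\limsup$ of its $n$-th root is at most $\varepsilon$, and no splitting at $n/2$ is needed.

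With this repair your direct verification is a correct proof and a genuinely different route from the paper's. The paper writes $a=a_1+a_2$ with $a_1=axa$, $a_2=a-axa$, absorbs $b$ into the quasinilpotent part via \cite[Lemma 2.1]{CD}, and then invokes the decomposition theorem \cite[Theorem 2.1]{C3}. Your route is self-contained and never needs the orthogonality $a_1^*(a_2+b)=0$, which you rightly suspected could fail: in the example above $a_1^*b=b\neq 0$. The paper verifies only $(a_2+b)a_1=0$ before appealing to that theorem.
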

\begin{proof} By virtue of ~\cite[Theorem 2.1]{C3}, there exists a decomposition $$a=x+y, yx=0, x\in \mathcal{A}^{\tiny\textcircled{\#}}, y\in \mathcal{A}^{qnil}.$$ Evidently, $x=aa^{\tiny\textcircled{d}}a$ and $y=a-aa^{\tiny\textcircled{d}}a$. Since $ba=0$, we see that
$by=0$. In view of~\cite[Lemma 2.1]{CD}, $b+y\in \mathcal{A}^{qnil}$. Moreover, we see that $(b+y)x=bx+yx=0$. According to~\cite[Theorem 2.1]{C3},
$a+b=x+(b+y)\in \mathcal{A}^{\tiny\textcircled{d}}$. In this case, $(a+b)^{\tiny\textcircled{d}}=x^{\tiny\textcircled{\#}}=a^{\tiny\textcircled{d}},$ as asserted.\end{proof}

\begin{lem} Let $a\in \mathcal{A}^{\tiny\textcircled{d}}$ and $b\in \mathcal{A}$. Then the following are equivalent:\end{lem}
\begin{enumerate}
\vspace{-.5mm}
\item [(1)] $a^{\pi}b=0$.
\vspace{-.5mm}
\item [(2)] $a^{\tau}b=0$.
\vspace{-.5mm}
\item [(3)] $a^{\sigma}b=0$.
\end{enumerate}
\begin{proof} $(1)\Rightarrow (2)$ Since $a^{\pi}b=0$, we have that $b=aa^db$. Then $a^{\tiny\textcircled{d}}ab=a^{\tiny\textcircled{d}}a^2a^db=aa^db=b$.
Hence $(1-a^{\tiny\textcircled{d}}a)b=0$, as required.

$(2)\Rightarrow (3)$ Since $(1-a^{\tiny\textcircled{d}}a)b=0$, we have $b=a^{\tiny\textcircled{d}}ab$. Then
$(1-aa^{\tiny\textcircled{d}})b=(1-aa^{\tiny\textcircled{d}})a^{\tiny\textcircled{d}}ab=0.$

$(3)\Rightarrow (1)$ Since $(1-aa^{\tiny\textcircled{d}})b=0$, we have $b=aa^{\tiny\textcircled{d}}b$. Then
$$aa^db=a^2a^da^{\tiny\textcircled{d}}b=aa^{\tiny\textcircled{d}}b=b.$$ Therefore $a^{\pi}b=0$, as asserted.
\end{proof}

\begin{lem} Let $a\in \mathcal{A}^{\tiny\textcircled{\#}}$ and $b\in \mathcal{A}^{\tiny\textcircled{d}}$. If $ba=0$ and $b^*a=0$, then
$a+b\in \mathcal{A}^{\tiny\textcircled{d}}$. In this case, $$(a+b)^{\tiny\textcircled{d}}=a^{\tiny\textcircled{\#}}
+a^{\pi}b^{\tiny\textcircled{d}}.$$\end{lem}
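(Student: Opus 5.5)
The plan is to reduce the statement to a core-invertibility statement and then apply the decomposition theorem \cite[Theorem 2.1]{C3}, which was already used in the proof of Lemma 2.1. By that theorem, write
$$b=b_1+b_2,\qquad b_1=bb^{\tiny\textcircled{d}}b\in \mathcal{A}^{\tiny\textcircled{\#}},\qquad b_2=b-bb^{\tiny\textcircled{d}}b\in \mathcal{A}^{qnil},\qquad b_2b_1=0,$$
with $b_1^{\tiny\textcircled{\#}}=b^{\tiny\textcircled{d}}$. Then
$$a+b=(a+b_1)+b_2 .$$
We have $b_2a=ba-bb^{\tiny\textcircled{d}}ba=0$ and $b_2b_1=0$, so $b_2(a+b_1)=0$. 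Hence, once we know that $a+b_1\in\mathcal{A}^{\tiny\textcircled{\#}}$, the converse direction of \cite[Theorem 2.1]{C3} (as in Lemma 2.1) gives $a+b\in\mathcal{A}^{\tiny\textcircled{d}}$ and $(a+b)^{\tiny\textcircled{d}}=(a+b_1)^{\tiny\textcircled{\#}}$. So it remains to show that $a+b_1$ is core invertible with core inverse
$$z=a^{\tiny\textcircled{\#}}+a^{\pi}b^{\tiny\textcircled{d}}.$$

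First I would collect the orthogonality relations coming from $ba=0$ and $b^*a=0$ (equivalently $a^*b=0$).
\begin{itemize}
\item From $a^{\tiny\textcircled{\#}}=a(a^{\tiny\textcircled{\#}})^2$ we get $ba^{\tiny\textcircled{\#}}=0$, and therefore $b_1a^{\tiny\textcircled{\#}}=0$.
\item From $aa^{\tiny\textcircled{\#}}=(aa^{\tiny\textcircled{\#}})^*=(a^{\tiny\textcircled{\#}})^*a^*$ we get $aa^{\tiny\textcircled{\#}}b=0$. Since $aa^{\#}=a^{\#}a\,aa^{\tiny\textcircled{\#}}$, this gives $aa^{\#}b=0$, i.e. $a^{\pi}b=b$, and hence $a^\pi b_1=b_1$.
\item From $b^{\tiny\textcircled{d}}=b^{\tiny\textcircled{d}}(bb^{\tiny\textcircled{d}})^*=b^{\tiny\textcircled{d}}(b^{\tiny\textcircled{d}})^*b^*$ we get $b^{\tiny\textcircled{d}}a=0$.
\item $a^{\tiny\textcircled{\#}}b=a^{\tiny\textcircled{\#}}(a^{\tiny\textcircled{\#}})^*a^*b=0$.
\item $b_1$ and $b^{\tiny\textcircled{d}}$ lie in $b\mathcal{A}$, and $a^*b=0$, so $a^*b_1=0$ and $a^*b^{\tiny\textcircled{d}}=0$.
\end{itemize}

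Next, compute $(a+b_1)z$. Using $b_1a^{\tiny\textcircled{\#}}=0$ and $a\,a^{\pi}=a-a^2a^{\#}$, I expect
$$(a+b_1)z=aa^{\tiny\textcircled{\#}}+(a-a^2a^{\#})b^{\tiny\textcircled{d}}+b_1a^{\pi}b^{\tiny\textcircled{d}}.$$
The middle and last terms need simplifying. I would rewrite $b_1a^{\pi}b^{\tiny\textcircled{d}}=b_1b^{\tiny\textcircled{d}}-b_1aa^{\#}b^{\tiny\textcircled{d}}$ and use $b_1a=0$, so that $b_1a^{\pi}b^{\tiny\textcircled{d}}=bb^{\tiny\textcircled{d}}$. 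The remaining term $ab^{\tiny\textcircled{d}}-a^2a^{\#}b^{\tiny\textcircled{d}}=a^{\pi}ab^{\tiny\textcircled{d}}$ is the delicate one.

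\textbf{Main obstacle.} The main obstacle is showing that $(a+b_1)z$ is self-adjoint. The projections $aa^{\tiny\textcircled{\#}}$ and $bb^{\tiny\textcircled{d}}$ are self-adjoint and mutually orthogonal: $aa^{\tiny\textcircled{\#}}bb^{\tiny\textcircled{d}}=0$ by the second item, and $bb^{\tiny\textcircled{d}}aa^{\tiny\textcircled{\#}}=0$ by the third. So what is needed is control of the cross term $a^{\pi}ab^{\tiny\textcircled{d}}$. I would first try to show it vanishes. The natural identity is $a^{\pi}a=a-a^2a^{\#}=0$, since $a$ is group invertible, so $a^\pi a=0$ and the term indeed vanishes. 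Then
$$(a+b_1)z=aa^{\tiny\textcircled{\#}}+bb^{\tiny\textcircled{d}},$$
which is a sum of orthogonal self-adjoint projections, hence self-adjoint.

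Finally, verify $(a+b_1)z^2=z$ and $z(a+b_1)^2=a+b_1$. Expand both using:
\begin{itemize}
\item $b^{\tiny\textcircled{d}}a=0$, $a^{\tiny\textcircled{\#}}b_1=0$ and $b_1a=0$;
\item $a^{\pi}a=0$ and $a^\pi b_1=b_1$;
\item $b^{\tiny\textcircled{d}}b_1^2=b_1$ and $aa^{\tiny\textcircled{\#}}a=a$.
\end{itemize}
I expect both identities to collapse term by term to the required ones.
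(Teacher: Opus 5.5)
Your architecture matches the paper's. You split $b=b_1+b_2$ by \cite[Theorem 2.1]{C3}, note $b_2(a+b_1)=0$, and reduce to showing that $a+b_1$ is core invertible with core inverse $z=a^{\tiny\textcircled{\#}}+a^{\pi}b^{\tiny\textcircled{d}}$. The paper gets that last step by quoting \cite[Theorem 4.3]{XS1}; you verify it by hand, which is a legitimate self-contained alternative. However, your second bullet is false, and you list its consequence $a^{\pi}b_1=b_1$ among the tools for the final verification. The identity you invoke is wrong: $a^{\#}a\,aa^{\tiny\textcircled{\#}}=a^{\#}a^{2}a^{\tiny\textcircled{\#}}=aa^{\tiny\textcircled{\#}}$, not $aa^{\#}$. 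What $a^{*}b=0$ actually gives is $aa^{\tiny\textcircled{\#}}b=0$; for matrices this says the range of $b$ lies in $\ker a^{*}$, not in $\ker a$. Concretely, in $\mathbb{C}^{2\times 2}$ take $a=\left(\begin{smallmatrix}1&1\\0&0\end{smallmatrix}\right)$ and $b=\left(\begin{smallmatrix}0&0\\0&1\end{smallmatrix}\right)$. Then $ba=b^{*}a=0$, $a^{\#}=a$, $a^{\tiny\textcircled{\#}}=\left(\begin{smallmatrix}1&0\\0&0\end{smallmatrix}\right)$ and $b^{\tiny\textcircled{d}}=b$, but $aa^{\#}b=ab=\left(\begin{smallmatrix}0&1\\0&0\end{smallmatrix}\right)\neq 0$. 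If $a^{\pi}b=b$ held, the formula would collapse to $a^{\tiny\textcircled{\#}}+b^{\tiny\textcircled{d}}$, which here is $I_2$. In fact $(a+b)^{-1}=\left(\begin{smallmatrix}1&-1\\0&1\end{smallmatrix}\right)=a^{\tiny\textcircled{\#}}+a^{\pi}b^{\tiny\textcircled{d}}$. The factor $a^{\pi}$ is there precisely because $ab\neq 0$ is allowed.

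The repair is easy. Your computation $(a+b_1)z=aa^{\tiny\textcircled{\#}}+bb^{\tiny\textcircled{d}}$ is correct and does not use the false bullet. Keep only $aa^{\tiny\textcircled{\#}}b=0$ from that bullet. In place of $a^{\pi}b_1=b_1$, use the true identities $a^{\tiny\textcircled{\#}}a=a^{\#}a$ (because $a^{\#}=(a^{\tiny\textcircled{\#}})^{2}a$) and $aa^{\tiny\textcircled{\#}}a^{\pi}=aa^{\tiny\textcircled{\#}}-aa^{\#}$. Together with $b_1a=0$, $a^{\tiny\textcircled{\#}}b_1=0$, $b^{\tiny\textcircled{d}}a=0$, $aa^{\tiny\textcircled{\#}}b^{\tiny\textcircled{d}}=0$, $b^{\tiny\textcircled{d}}b_1^{2}=b_1$ and $b(b^{\tiny\textcircled{d}})^{2}=b^{\tiny\textcircled{d}}$, one gets
\[
z(a+b_1)^{2}=a+a^{\#}ab_1+a^{\pi}b_1=a+b_1,\qquad (a+b_1)z^{2}=a^{\tiny\textcircled{\#}}-aa^{\#}b^{\tiny\textcircled{d}}+b^{\tiny\textcircled{d}}=z .
\]
With the second bullet corrected in this way, your direct verification goes through and yields the lemma.
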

\begin{proof} By virtue of ~\cite[Theorem 2.1]{C3}, there exists a decomposition $$b=x+y, yx=0, x\in \mathcal{A}^{\tiny\textcircled{\#}}, y\in \mathcal{A}^{qnil}.$$ Evidently, $x=bb^{\tiny\textcircled{d}}b$ and $y=b-bb^{\tiny\textcircled{d}}b$. Since $ba=0$ and $b^*a=0$, we see that
$xa=0$ and $x^*a=(bb^{\tiny\textcircled{d}}b)^*a=(b^{\tiny\textcircled{d}}b)^*(b^*a)=0$. By virtue of ~\cite[Theorem 4.3]{XS1}, $a+x\in \mathcal{A}^{\tiny\textcircled{\#}}$. We easily check that $y(a+x)=ya+yx=(1-bb^{\tiny\textcircled{d}})ba+yx=0$. By virtue of
~\cite[Theorem 2.1]{C3}, $a+b=(a+x)+y\in \mathcal{A}^{\tiny\textcircled{d}}$. In this case, $(a+b)^{\tiny\textcircled{d}}=(a+x)^{\tiny\textcircled{\#}}=a^{\pi}x^{\tiny\textcircled{\#}}+a^{\tiny\textcircled{\#}}=a^{\tiny\textcircled{\#}}
+a^{\pi}b^{\tiny\textcircled{d}},$ as required.\end{proof}

\begin{lem} Let $a\in \mathcal{A}$. Then $a\in \mathcal{A}^{\tiny\textcircled{\#}}$ if and only if
$a\in \mathcal{A}^{\tiny\textcircled{d}}$ and $aa^{\tiny\textcircled{d}}a=a.$\end{lem}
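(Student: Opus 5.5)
We prove the two implications separately. Throughout we use only the defining equations of the core inverse and of the generalized core-EP inverse, together with uniqueness of the latter, which is implicit in the notation $a^{\tiny\textcircled{d}}$.

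\emph{Necessity.} Suppose $a\in \mathcal{A}^{\tiny\textcircled{\#}}$ and put $x=a^{\tiny\textcircled{\#}}$. Then $ax^2=x$ and $(ax)^*=ax$ hold by definition. From $xa^2=a$ we get $a^n=xa^{n+1}$ for every $n\geq 1$, since
$$xa^{n+1}=(xa^2)a^{n-1}=a\cdot a^{n-1}=a^n.$$
Hence $\|a^n-xa^{n+1}\|^{1/n}=0$ for all $n\geq 1$, so the limit is $0$, and $x$ is the generalized core-EP inverse of $a$. By uniqueness, $a^{\tiny\textcircled{d}}=a^{\tiny\textcircled{\#}}$. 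It remains to show $axa=a$. Write $x=ax^2$, so that
$$axa^2=a(ax^2)a^2=a^2x(xa^2)=a^2xa.$$
A cleaner route: the equations $ax^2=x$ and $xa^2=a$ give $x=ax\cdot x$ and $a=xa\cdot a$, which is the standard setting in which $axa=a$ holds. Alternatively, since $a$ is core invertible it is group invertible with $aa^{\tiny\textcircled{\#}}=aa^\#$, and therefore $aa^{\tiny\textcircled{\#}}a=aa^\#a=a$. I would cite this standard fact about core inverses (for example via \cite{XS1}) rather than re-derive it.

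\emph{Sufficiency.} Suppose $a\in \mathcal{A}^{\tiny\textcircled{d}}$ and $aa^{\tiny\textcircled{d}}a=a$, and put $x=a^{\tiny\textcircled{d}}$. The equations $ax^2=x$ and $(ax)^*=ax$ already hold, so we only need $xa^2=a$. Right-multiplying $axa=a$ by $a$ gives
$$axa^2=a^2.$$
We also need $xa^2=a$ itself. Write $x=ax^2$, so that
$$xa^2=ax^2a^2=ax\cdot(xa^2).$$
That alone does not close the argument. A better approach uses the decomposition from \cite[Theorem 2.1]{C3}, exactly as in the earlier lemmas: $a=a_1+a_2$ with $a_1=axa$ core invertible, $a_2=a-axa$ quasinilpotent, $a_2a_1=0$, and $x=a_1^{\tiny\textcircled{\#}}$. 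The hypothesis $axa=a$ says exactly that $a_2=0$. Hence $a=a_1\in\mathcal{A}^{\tiny\textcircled{\#}}$, with $a^{\tiny\textcircled{\#}}=x$.

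The main point needing care is the identification $x=(aa^{\tiny\textcircled{d}}a)^{\tiny\textcircled{\#}}$ inside \cite[Theorem 2.1]{C3}, which is the form in which the preceding lemmas used that theorem. For necessity, we need the fact that $aa^{\tiny\textcircled{\#}}a=a$, which follows from the known equality $aa^{\tiny\textcircled{\#}}=aa^\#$.
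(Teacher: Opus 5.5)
Your necessity argument is fine up to $a^{\tiny\textcircled{d}}=a^{\tiny\textcircled{\#}}$; showing $a^n=a^{\tiny\textcircled{\#}}a^{n+1}$ and invoking uniqueness is more careful than the paper, which just asserts it. The justification you finally give for $aa^{\tiny\textcircled{\#}}a=a$ is wrong, however. The ``known equality'' $aa^{\tiny\textcircled{\#}}=aa^{\#}$ fails in general: $aa^{\tiny\textcircled{\#}}$ is a self-adjoint idempotent and $aa^{\#}$ need not be, and the two coincide exactly when $a$ is EP. For $a=\begin{pmatrix}1&1\\0&0\end{pmatrix}$ you have $a^{\#}=a$, so $aa^{\#}=a$. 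But $a^{\tiny\textcircled{\#}}=\begin{pmatrix}1&0\\0&0\end{pmatrix}$, so $aa^{\tiny\textcircled{\#}}=\begin{pmatrix}1&0\\0&0\end{pmatrix}\neq aa^{\#}$. The true identity is on the other side, $a^{\tiny\textcircled{\#}}a=a^{\tiny\textcircled{\#}}a^2a^{\#}=aa^{\#}$. No citation is needed at all: with $x=a^{\tiny\textcircled{\#}}$ you get $axa=ax(xa^2)=(ax^2)a^2=xa^2=a$. That one line should replace the whole exploratory passage.

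For sufficiency your route is genuinely different from the paper's. You read the conclusion off the decomposition of \cite[Theorem 2.1]{C3}: the hypothesis kills the quasinilpotent summand, so $a=aa^{\tiny\textcircled{d}}a$ is core invertible. This is legitimate provided you take from that theorem that the core-invertible summand is exactly $aa^{\tiny\textcircled{d}}a$, which is how the paper itself uses it in Lemmas 2.1 and 2.3. You do not even need $x=a_1^{\tiny\textcircled{\#}}$ separately: once $a$ is core invertible, your necessity argument plus uniqueness gives $a^{\tiny\textcircled{\#}}=a^{\tiny\textcircled{d}}$.

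The paper instead proves $a^{\tiny\textcircled{d}}a^2=a$ directly. Iterating $a(a^{\tiny\textcircled{d}})^2=a^{\tiny\textcircled{d}}$ gives $aa^{\tiny\textcircled{d}}=a^n(a^{\tiny\textcircled{d}})^n$. Using this and $a=aa^{\tiny\textcircled{d}}a$, the paper writes $a-a^{\tiny\textcircled{d}}a^2=(a^n-a^{\tiny\textcircled{d}}a^{n+1})(a^{\tiny\textcircled{d}})^na$ for every $n$. Hence $\|a-a^{\tiny\textcircled{d}}a^2\|^{1/n}\to 0$, which forces $a=a^{\tiny\textcircled{d}}a^2$.

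Your version is shorter, but it hides the only real use of the limit condition inside the black box. The paper's argument is self-contained and shows exactly how the norm condition upgrades $axa=a$ to $xa^2=a$. You were right that algebra alone cannot close this step: in $\mathcal{B}(\ell^2)$, with $S$ the unilateral shift, $a=S^*$ and $x=S$ satisfy $ax^2=x$, $(ax)^*=ax$ and $axa=a$, yet $xa^2e_2=0\neq e_1=ae_2$.
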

\begin{proof} $\Longrightarrow$ Since $a\in \mathcal{A}^{\tiny\textcircled{\#}}$, we have $a^{\tiny\textcircled{d}}=
a^{\tiny\textcircled{\#}}$, and then $aa^{\tiny\textcircled{d}}a=a.$

$\Longrightarrow$ For any $n\in {\Bbb N}$, we deduce that
$$\begin{array}{rll}
||a-a^{\tiny\textcircled{d}}a^2||&=&||aa^{\tiny\textcircled{d}}a-a^{\tiny\textcircled{d}}a(aa^{\tiny\textcircled{d}}a)||\\
&=&||a^n(a^{\tiny\textcircled{d}})^na-a^{\tiny\textcircled{d}}a\big(a^n(a^{\tiny\textcircled{d}})^n)a\big)||\\
&=&||\big(a^n-a^{\tiny\textcircled{d}}a^{n+1}\big)(a^{\tiny\textcircled{d}})^n)a||\\
&\leq &||a^n-a^{\tiny\textcircled{d}}a^{n+1}||||(a^{\tiny\textcircled{d}})^na||.
\end{array}$$ Then
$$||a-a^{\tiny\textcircled{d}}a^2||^{\frac{1}{n}}\leq ||a^n-a^{\tiny\textcircled{d}}a^{n+1}|^{\frac{1}{n}}||a^{\tiny\textcircled{d}}||||a||^{\frac{1}{n}}.$$
Since $\lim\limits_{n\to \infty}||a^n-a^{\tiny\textcircled{d}}a^{n+1}||^{\frac{1}{n}}=0,$ we have $\lim\limits_{n\to \infty}||a-a^{\tiny\textcircled{d}}a^2||^{\frac{1}{n}}=0.$ This implies that $a-a^{\tiny\textcircled{d}}a^2=0$.
Thus we  have $a=a(a^{\tiny\textcircled{d}})^2, a=a^{\tiny\textcircled{d}}a^2$ and $(aa^{\tiny\textcircled{d}})^*=aa^{\tiny\textcircled{d}}$.
Accordingly, $a^{\tiny\textcircled{\#}}=a^{\tiny\textcircled{d}}$, as desired.\end{proof}

\section{The main results}

In this section, we consider
$$x=\left(
\begin{array}{cc}
a&b\\
c&d
\end{array}
\right)\in M_2(\mathcal{A}) \quad \quad (1),$$ where $a$ and $s$ admit generalized core-EP inverses. Set $a^{\sigma}=1-aa^{\tiny\textcircled{d}}$ and $a^{\tau}=1-a^{\tiny\textcircled{d}}a.$ Set $s=d-ca^{\tiny\textcircled{d}}b$ and $p=\left(
\begin{array}{cc}
a^{\tau}&0\\
0&s^{\tau}
\end{array}
\right).$ We now derive

\begin{thm} Let $x$ be defined as in $(1)$. If $$a^{\pi}bs^{\tiny\textcircled{d}}=s^{\pi}ca^{\tiny\textcircled{d}}=a^{\tiny\textcircled{d}}bs^{\tau}=
s^{\tiny\textcircled{d}}ca^{\tau}=0$$ and
$\lim\limits_{n\to \infty}||px^n||^{\frac{1}{n}}=0,$ then $x$ has generalized core-EP inverse. In this case, $$x^{\tiny\textcircled{d}}=\left(
\begin{array}{cc}
a^{\tiny\textcircled{d}}+a^{\tiny\textcircled{d}}bs^{\tiny\textcircled{d}}ca^{\tiny\textcircled{d}}&-a^{\tiny\textcircled{d}}bs^{\tiny\textcircled{d}}\\
-s^{\tiny\textcircled{d}}ca^{\tiny\textcircled{d}}&s^{\tiny\textcircled{d}}
\end{array}
\right).$$\end{thm}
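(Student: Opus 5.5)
Denote the proposed matrix by $z$. We verify directly the three defining conditions of the generalized core-EP inverse in $M_2(\mathcal{A})$, whose involution is the conjugate transpose: $xz^2=z$, $(xz)^*=xz$, and $\lim_{n\to\infty}\|x^n-zx^{n+1}\|^{\frac1n}=0$.

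The first step is to rewrite the hypotheses in a more usable form. By the lemma on the equivalence of $a^{\pi}b=0$, $a^{\tau}b=0$ and $a^{\sigma}b=0$, applied with $b$ replaced by $bs^{\tiny\textcircled{d}}$, the condition $a^{\pi}bs^{\tiny\textcircled{d}}=0$ gives
\[
aa^{\tiny\textcircled{d}}bs^{\tiny\textcircled{d}}=bs^{\tiny\textcircled{d}},\qquad a^{\tiny\textcircled{d}}abs^{\tiny\textcircled{d}}=bs^{\tiny\textcircled{d}}.
\]
Similarly $s^{\pi}ca^{\tiny\textcircled{d}}=0$ gives
\[
ss^{\tiny\textcircled{d}}ca^{\tiny\textcircled{d}}=ca^{\tiny\textcircled{d}},\qquad s^{\tiny\textcircled{d}}sca^{\tiny\textcircled{d}}=ca^{\tiny\textcircled{d}}.
\]
The remaining conditions say that $a^{\tiny\textcircled{d}}bs^{\tiny\textcircled{d}}s=a^{\tiny\textcircled{d}}b$ and $s^{\tiny\textcircled{d}}ca^{\tiny\textcircled{d}}a=s^{\tiny\textcircled{d}}c$.

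The second step is to compute $xz$, using $d=s+ca^{\tiny\textcircled{d}}b$. The $(1,1)$ entry is
\[
aa^{\tiny\textcircled{d}}+aa^{\tiny\textcircled{d}}bs^{\tiny\textcircled{d}}ca^{\tiny\textcircled{d}}-bs^{\tiny\textcircled{d}}ca^{\tiny\textcircled{d}}=aa^{\tiny\textcircled{d}},
\]
and the $(1,2)$ entry is $-aa^{\tiny\textcircled{d}}bs^{\tiny\textcircled{d}}+bs^{\tiny\textcircled{d}}=0$. In the bottom row, the terms containing $ca^{\tiny\textcircled{d}}bs^{\tiny\textcircled{d}}$ cancel, and the $ss^{\tiny\textcircled{d}}ca^{\tiny\textcircled{d}}=ca^{\tiny\textcircled{d}}$ relation kills the $(2,1)$ entry. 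Hence I expect
\[
xz=\operatorname{diag}(aa^{\tiny\textcircled{d}},ss^{\tiny\textcircled{d}}),
\]
which is Hermitian because $aa^{\tiny\textcircled{d}}$ and $ss^{\tiny\textcircled{d}}$ are. It then follows that
\[
xz^2=\operatorname{diag}(aa^{\tiny\textcircled{d}},ss^{\tiny\textcircled{d}})\,z=z,
\]
since $aa^{\tiny\textcircled{d}}a^{\tiny\textcircled{d}}=a^{\tiny\textcircled{d}}$, $ss^{\tiny\textcircled{d}}s^{\tiny\textcircled{d}}=s^{\tiny\textcircled{d}}$, and the rows of $z$ begin with $a^{\tiny\textcircled{d}}$ and $s^{\tiny\textcircled{d}}$ on the left.

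The third step, which is the main obstacle, is the limit condition. I would compute $zx$ and aim to show
\[
zx=1-p=\operatorname{diag}(a^{\tiny\textcircled{d}}a,\,s^{\tiny\textcircled{d}}s).
\]
Its $(1,1)$ entry is $a^{\tiny\textcircled{d}}a+a^{\tiny\textcircled{d}}bs^{\tiny\textcircled{d}}ca^{\tiny\textcircled{d}}a-a^{\tiny\textcircled{d}}bs^{\tiny\textcircled{d}}c$, and the last two terms cancel by $s^{\tiny\textcircled{d}}ca^{\tiny\textcircled{d}}a=s^{\tiny\textcircled{d}}c$. The $(1,2)$ entry reduces, after expanding $d$, to $a^{\tiny\textcircled{d}}b-a^{\tiny\textcircled{d}}bs^{\tiny\textcircled{d}}s$, which vanishes by $a^{\tiny\textcircled{d}}bs^{\tiny\textcircled{d}}s=a^{\tiny\textcircled{d}}b$. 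For the $(2,1)$ entry, $-s^{\tiny\textcircled{d}}ca^{\tiny\textcircled{d}}a+s^{\tiny\textcircled{d}}c=0$. The $(2,2)$ entry is $-s^{\tiny\textcircled{d}}ca^{\tiny\textcircled{d}}b+s^{\tiny\textcircled{d}}d=s^{\tiny\textcircled{d}}s$.

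Granting this, we get
\[
x^n-zx^{n+1}=(1-zx)x^n=px^n,
\]
so the hypothesis $\lim_{n\to\infty}\|px^n\|^{\frac1n}=0$ gives exactly the third defining condition, and $z=x^{\tiny\textcircled{d}}$. The care needed is in the entry-by-entry bookkeeping with $d=s+ca^{\tiny\textcircled{d}}b$. One should also check that each of the four hypotheses is used in the one-sided form required: the $\pi$-conditions act on the left in $xz$, and the $\tau$-conditions act on the right in $zx$.
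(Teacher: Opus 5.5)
Your proposal is correct and follows the paper's proof. The paper likewise verifies directly that $xz=\mathrm{diag}(1-a^{\sigma},1-s^{\sigma})$ is self-adjoint, that $xz^2=z$, and that $zx=1-p$, so that $x^n-zx^{n+1}=px^n$; your entrywise bookkeeping (Lemma 2.2 putting the $\pi$-hypotheses into $\sigma$-form for $xz$, the $\tau$-hypotheses handling $zx$) supplies the details the paper compresses into ``we verify that''.
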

\begin{proof} Set $z=\left(
\begin{array}{cc}
a^{\tiny\textcircled{d}}+a^{\tiny\textcircled{d}}bs^{\tiny\textcircled{d}}ca^{\tiny\textcircled{d}}&-a^{\tiny\textcircled{d}}bs^{\tiny\textcircled{d}}\\
-s^{\tiny\textcircled{d}}ca^{\tiny\textcircled{d}}&s^{\tiny\textcircled{d}}
\end{array}
\right).$ Then we verify that
$$\begin{array}{rll}
xz&=&\left(
\begin{array}{cc}
a&b\\
c&d
\end{array}
\right)\left(
\begin{array}{cc}
a^{\tiny\textcircled{d}}+a^{\tiny\textcircled{d}}bs^{\tiny\textcircled{d}}ca^{\tiny\textcircled{d}}&-a^{\tiny\textcircled{d}}bs^{\tiny\textcircled{d}}\\
-s^{\tiny\textcircled{d}}ca^{\tiny\textcircled{d}}&s^{\tiny\textcircled{d}}
\end{array}
\right)\\
&=&\left(
\begin{array}{cc}
aa^{\tiny\textcircled{d}}&0\\
0&ss^{\tiny\textcircled{d}}
\end{array}
\right),\\
(xz)^*&=&xz,\\
zx&=&\left(
\begin{array}{cc}
a^{\tiny\textcircled{d}}+a^{\tiny\textcircled{d}}bs^{\tiny\textcircled{d}}ca^{\tiny\textcircled{d}}&-a^{\tiny\textcircled{d}}bs^{\tiny\textcircled{d}}\\
-s^{\tiny\textcircled{d}}ca^{\tiny\textcircled{d}}&s^{\tiny\textcircled{d}}
\end{array}
\right)\left(
\begin{array}{cc}
a&b\\
c&d
\end{array}
\right)\\
&=&\left(
\begin{array}{cc}
a^{\tiny\textcircled{d}}a&0\\
0&s^{\tiny\textcircled{d}}s
\end{array}
\right),\\
xz^2&=&(xz)z=\left(
\begin{array}{cc}
aa^{\tiny\textcircled{d}}&0\\
0&ss^{\tiny\textcircled{d}}
\end{array}
\right)\left(
\begin{array}{cc}
a^{\tiny\textcircled{d}}+a^{\tiny\textcircled{d}}bs^{\tiny\textcircled{d}}ca^{\tiny\textcircled{d}}&-a^{\tiny\textcircled{d}}bs^{\tiny\textcircled{d}}\\
-s^{\tiny\textcircled{d}}ca^{\tiny\textcircled{d}}&s^{\tiny\textcircled{d}}
\end{array}
\right)\\
&=&\left(
\begin{array}{cc}
a^{\tiny\textcircled{d}}+a^{\tiny\textcircled{d}}bs^{\tiny\textcircled{d}}ca^{\tiny\textcircled{d}}&-a^{\tiny\textcircled{d}}bs^{\tiny\textcircled{d}}\\
-s^{\tiny\textcircled{d}}ca^{\tiny\textcircled{d}}&s^{\tiny\textcircled{d}}
\end{array}
\right)=z,\\
(1-zx)x^n&=&px^n.
\end{array}$$ This completes the proof.\end{proof}

\begin{cor} Let $x$ be defined as in $(1)$. If $$a^{\pi}b=s^{\pi}c=bs^{\tau}=
ca^{\tau}=0,$$ then $x$ has generalized core-EP inverse. In this case, $$x^{\tiny\textcircled{d}}=\left(
\begin{array}{cc}
a^{\tiny\textcircled{d}}+a^{\tiny\textcircled{d}}bs^{\tiny\textcircled{d}}ca^{\tiny\textcircled{d}}&-a^{\tiny\textcircled{d}}bs^{\tiny\textcircled{d}}\\
-s^{\tiny\textcircled{d}}ca^{\tiny\textcircled{d}}&s^{\tiny\textcircled{d}}
\end{array}
\right)$$\end{cor}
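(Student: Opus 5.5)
The plan is to deduce the corollary directly from Theorem 3.1. I will check that its four hypotheses imply the four vanishing conditions of the theorem, and that they also force $\lim_{n\to\infty}\|px^n\|^{\frac1n}=0$.

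\emph{The vanishing conditions.} These follow by multiplying on the appropriate side:
\[
a^{\pi}bs^{\tiny\textcircled{d}}=(a^{\pi}b)s^{\tiny\textcircled{d}}=0,\qquad s^{\pi}ca^{\tiny\textcircled{d}}=(s^{\pi}c)a^{\tiny\textcircled{d}}=0,
\]
\[
a^{\tiny\textcircled{d}}bs^{\tau}=a^{\tiny\textcircled{d}}(bs^{\tau})=0,\qquad s^{\tiny\textcircled{d}}ca^{\tau}=s^{\tiny\textcircled{d}}(ca^{\tau})=0.
\]

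\emph{Computing $px$.} By Lemma 2.2, $a^{\pi}b=0$ gives $a^{\tau}b=0$, and $s^{\pi}c=0$ gives $s^{\tau}c=0$. Since $d=s+ca^{\tiny\textcircled{d}}b$, we get $s^{\tau}d=s^{\tau}s+(s^{\tau}c)a^{\tiny\textcircled{d}}b=s^{\tau}s$. Hence
\[
px=\left(\begin{array}{cc} a^{\tau}a&0\\ 0&s^{\tau}s\end{array}\right).
\]

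\emph{Identifying the diagonal entries.} Use the decomposition from \cite[Theorem 2.1]{C3}: $a=a_1+a_2$ with $a_2a_1=0$, $a_1\in\mathcal{A}^{\tiny\textcircled{\#}}$, $a_2\in\mathcal{A}^{qnil}$, and $a^{\tiny\textcircled{d}}=a_1^{\tiny\textcircled{\#}}$. I also expect this decomposition to carry the relation $a_1^*a_2=0$, which is standard for the core-EP decomposition. This is the point I would double-check first; if it is not in the cited theorem, I would extract it from $a_2=a^{\sigma}a$ together with $(aa^{\tiny\textcircled{d}})^*=aa^{\tiny\textcircled{d}}$. Granting it, we get
\[
a_1^{\tiny\textcircled{\#}}a_2=a_1^{\tiny\textcircled{\#}}(a_1a_1^{\tiny\textcircled{\#}})^*a_2=a_1^{\tiny\textcircled{\#}}(a_1^{\tiny\textcircled{\#}})^*a_1^*a_2=0 .
\]
So $a^{\tau}=1-a_1^{\tiny\textcircled{\#}}a_1$, and therefore $a^{\tau}a=(1-a_1^{\tiny\textcircled{\#}}a_1)(a_1+a_2)=a_2$, which is quasinilpotent. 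Moreover, from $a_2a_1=0$ and $a_1^{\tiny\textcircled{\#}}=a_1(a_1^{\tiny\textcircled{\#}})^2$ we get $a_2a_1^{\tiny\textcircled{\#}}=0$. This gives $(a^{\tau}a)a^{\tau}=a^{\tau}a$. The same argument applies to $s$. Consequently $px$ is quasinilpotent, being diagonal with quasinilpotent entries, and $pxp=px$.

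\emph{The growth condition.} Induction on $n$ gives
\[
px^{n+1}=(px^n)x=(px)^nx=(px)^{n-1}(pxp)x=(px)^{n+1},
\]
where the middle steps use $px=pxp$. Hence $\|px^n\|^{\frac1n}=\|(px)^n\|^{\frac1n}\to0$. Theorem 3.1 then yields that $x$ is generalized core-EP invertible, with $x^{\tiny\textcircled{d}}$ given by the stated formula.

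The main obstacle is the identification of $a^{\tau}a$ and $s^{\tau}s$ as quasinilpotent elements satisfying $pxp=px$. It rests on the orthogonality $a_1^*a_2=0$ in the core-EP decomposition.
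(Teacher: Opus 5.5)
Your reduction to Theorem 3.1, your use of Lemma 2.2 to get $a^{\tau}b=s^{\tau}c=0$, and your computation $px=\begin{pmatrix}a^{\tau}a&0\\0&s^{\tau}s\end{pmatrix}$ all agree with the paper's proof. The problem is in the step you flagged yourself: the identity
\[
a^{\tau}a=(1-a_1^{\tiny\textcircled{\#}}a_1)(a_1+a_2)=a_2
\]
is false. Expanding gives $a_2-a_1^{\tiny\textcircled{\#}}a_1a_2$. You have only shown $a_1^{\tiny\textcircled{\#}}a_2=0$, not $a_1^{\tiny\textcircled{\#}}a_1a_2=0$, and the latter fails in general. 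For instance, in $\mathbb{C}^{3\times 3}$ take $a=\begin{pmatrix}1&1&0\\0&0&1\\0&0&0\end{pmatrix}$. Then $a^{\tiny\textcircled{d}}=\begin{pmatrix}1&0&0\\0&0&0\\0&0&0\end{pmatrix}$ and $a_2=a-aa^{\tiny\textcircled{d}}a=\begin{pmatrix}0&0&0\\0&0&1\\0&0&0\end{pmatrix}$, but $a^{\tau}a=\begin{pmatrix}0&0&-1\\0&0&1\\0&0&0\end{pmatrix}$. The element that equals $a_2$ is $aa^{\tau}=a^{\sigma}a$, not $a^{\tau}a$. So, as written, neither the quasinilpotency of $a^{\tau}a$ nor the relation $(a^{\tau}a)a^{\tau}=a^{\tau}a$ is justified, since both were derived from this identity.

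Both conclusions are nevertheless true, and the repair is short. First, $a^{\tau}a_1=a_1-a_1^{\tiny\textcircled{\#}}a_1^2=0$ gives $a^{\tau}a=a^{\tau}a_2$. Second, your observation $a_2a_1^{\tiny\textcircled{\#}}=0$ gives $a_2a^{\tau}=a_2$. Hence $(a^{\tau}a)a^{\tau}=a^{\tau}a_2a^{\tau}=a^{\tau}a_2=a^{\tau}a$.

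This identity also gives, by induction, $(a^{\tau}a)^n=a^{\tau}a^n=a^n-a^{\tiny\textcircled{d}}a^{n+1}$. So your $(px)^n$ is exactly the paper's matrix $\begin{pmatrix}a^{\tau}a^n&0\\0&s^{\tau}s^n\end{pmatrix}$. The limit then follows directly from the defining conditions of $a^{\tiny\textcircled{d}}$ and $s^{\tiny\textcircled{d}}$, with no separate quasinilpotency argument needed.

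With this fix, your device $pxp=px$ justifies the paper's formula for $px^n$ more cleanly than the paper does. It disposes of the off-diagonal blocks automatically, whereas the paper only remarks that $(a^{\tau}a)^nb=0$.
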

\begin{proof} As $a^{\pi}b=0$, it follows by Lemma 2.2 that $a^{\tau}b=0$. Moreover, we have
$(a^{\tau}a)^nb=0$. We directly verify that
$s^{\tau}d=s^{\tau}(s+ca^{\tiny\textcircled{d}}b)=s^{\tau}s$.
Then
$$px^n=\left(
\begin{array}{cc}
a^{\tau}&0\\
0&s^{\tau}
\end{array}
\right)\left(
\begin{array}{cc}
a&b\\
c&d
\end{array}
\right)^n=\left(
\begin{array}{cc}
a^{\tau}a^n&0\\
0&s^{\tau}s^n
\end{array}
\right).$$ Therefore $\lim\limits_{n\to \infty}||px^n||^{\frac{1}{n}}=0.$ This completes the prof by Theorem 3.1.
\end{proof}

\begin{cor} Let $x$ be defined as in $(1)$. If $s:=d-ca^{\tiny\textcircled{d}}b\in \mathcal{A}^{-1}$ and $$a^{\pi}b=0~\mbox{and} ~ca^{\tau}=0,$$ then $x$ has a generalized core-EP inverse. In this case, $$x^{\tiny\textcircled{d}}=\left(
\begin{array}{cc}
a^{\tiny\textcircled{d}}+a^{\tiny\textcircled{d}}bs^{-1}ca^{\tiny\textcircled{d}}&-a^{\tiny\textcircled{d}}bs^{-1}\\
-s^{-1}ca^{\tiny\textcircled{d}}&s^{-1}
\end{array}
\right).$$
\end{cor}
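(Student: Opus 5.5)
This is a direct specialization of Corollary 3.2 (the preceding corollary) with $s^{\tiny\textcircled{d}}=s^{-1}$. The plan is to verify its four hypotheses $a^{\pi}b=s^{\pi}c=bs^{\tau}=ca^{\tau}=0$ and then read off the formula.

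First, an invertible $s$ is core invertible with $s^{\tiny\textcircled{\#}}=s^{-1}$. The defining equations are immediate: $s(s^{-1})^2=s^{-1}$, $(ss^{-1})^*=1=ss^{-1}$, and $s^{-1}s^2=s$. By Lemma 2.4, $s\in\mathcal{A}^{\tiny\textcircled{d}}$ and $s^{\tiny\textcircled{d}}=s^{-1}$. Likewise $s^d=s^{-1}$, since $s-s^{-1}s^2=0$ is quasinilpotent.

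It follows that all the auxiliary idempotents of $s$ vanish:
$$s^{\pi}=1-ss^{-1}=0,\qquad s^{\tau}=1-s^{-1}s=0,\qquad s^{\sigma}=0.$$
Hence $s^{\pi}c=0$ and $bs^{\tau}=0$ hold automatically. The remaining two conditions, $a^{\pi}b=0$ and $ca^{\tau}=0$, are exactly the hypotheses. So Corollary 3.2 applies, and $x$ has a generalized core-EP inverse.

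Substituting $s^{\tiny\textcircled{d}}=s^{-1}$ into the formula of Corollary 3.2 gives the displayed expression. Alternatively, one could go back to Theorem 3.1. There $p=\mathrm{diag}(a^{\tau},0)$, and the proof of Corollary 3.2 shows $a^{\tau}b=0$ via Lemma 2.2, so $px^n=\mathrm{diag}(a^{\tau}a^n,0)$. The norm-root condition then follows from the definition of $a^{\tiny\textcircled{d}}$.

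There is no real obstacle. The only point needing care is that the formula uses $s^{\tiny\textcircled{d}}$, so one must justify $s^{\tiny\textcircled{d}}=s^{-1}$; this is done cleanly via Lemma 2.4, or directly by uniqueness of the generalized core-EP inverse.
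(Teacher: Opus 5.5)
Your proposal is correct and matches the paper's proof: the paper also notes that $s^{\tiny\textcircled{d}}=s^{-1}$ and $s^{\pi}=s^{\tau}=0$, so $s^{\pi}c=bs^{\tau}=0$ hold trivially, and then invokes Corollary 3.2. On your optional route via Theorem 3.1, getting $px^n=\mathrm{diag}(a^{\tau}a^n,0)$ needs $a^{\tau}a^kb=0$ for every $k$, not just $a^{\tau}b=0$; this follows from Lemma 2.2 applied to $a^kb$, since $a^{\pi}a^kb=a^ka^{\pi}b=0$.
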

\begin{proof} Since $s=d-ca^{\tiny\textcircled{d}}b\in \mathcal{A}^{-1}$, we see that $s^{\tiny\textcircled{d}}=s^{-1}$ and $s^{\pi}=s^{\tau}=0$.
Therefore we complete the proof by Corollary 3.2.\end{proof}

We next consider
$$x=\left(
\begin{array}{cc}
a&b\\
c&d
\end{array}
\right)\in M_2(\mathcal{A}) \quad \quad (2),$$ where $a$ and $s=d-ca^{\tiny\textcircled{D}}b$ admit core-EP inverses. We are ready to prove:

\begin{thm} Let $x$ be defined as in $(2)$. If $$a^{\pi}bs^{\tiny\textcircled{D}}=s^{\pi}ca^{\tiny\textcircled{D}}=a^{\tiny\textcircled{D}}bs^{\tau}=
s^{\tiny\textcircled{D}}ca^{\tau}=0$$ and
$px^n=0$ for some $n\in {\Bbb N}$, then $x$ has core-EP inverse. In this case,
$$x^{\tiny\textcircled{D}}=\left(
\begin{array}{cc}
a^{\tiny\textcircled{D}}+a^{\tiny\textcircled{D}}bs^{\tiny\textcircled{D}}ca^{\tiny\textcircled{D}}&-a^{\tiny\textcircled{D}}bs^{\tiny\textcircled{D}}\\
-s^{\tiny\textcircled{D}}ca^{\tiny\textcircled{D}}&s^{\tiny\textcircled{D}}
\end{array}
\right).$$\end{thm}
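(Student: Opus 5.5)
We will mirror the proof of Theorem 3.1, now with the core-EP inverse in place of the generalized core-EP inverse. Set
$$z=\left(
\begin{array}{cc}
a^{\tiny\textcircled{D}}+a^{\tiny\textcircled{D}}bs^{\tiny\textcircled{D}}ca^{\tiny\textcircled{D}}&-a^{\tiny\textcircled{D}}bs^{\tiny\textcircled{D}}\\
-s^{\tiny\textcircled{D}}ca^{\tiny\textcircled{D}}&s^{\tiny\textcircled{D}}
\end{array}
\right).$$
We must verify $xz^2=z$, $(xz)^*=xz$, and $zx^{k+1}=x^k$ for some $k$. Here $a^{\pi}$, $a^{\tau}$, $a^{\sigma}$ are formed with $a^{\tiny\textcircled{D}}$. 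A core-EP invertible element is generalized core-EP invertible, with $a^{\tiny\textcircled{d}}=a^{\tiny\textcircled{D}}$ and likewise for $s$. Hence Lemma 2.2 and the identities used in Theorem 3.1 remain available.

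The first step is the product $xz$. We will show that
$$xz=\left(
\begin{array}{cc}
aa^{\tiny\textcircled{D}}&0\\
0&ss^{\tiny\textcircled{D}}
\end{array}
\right).$$
For the $(1,2)$ entry we have $-aa^{\tiny\textcircled{D}}bs^{\tiny\textcircled{D}}+bs^{\tiny\textcircled{D}}=a^{\sigma}bs^{\tiny\textcircled{D}}$, which vanishes by Lemma 2.2 applied to $a^{\pi}(bs^{\tiny\textcircled{D}})=0$. For the $(2,2)$ entry we use $d-ca^{\tiny\textcircled{D}}b=s$. For the $(2,1)$ entry we get $ca^{\tiny\textcircled{D}}-ss^{\tiny\textcircled{D}}ca^{\tiny\textcircled{D}}=s^{\sigma}ca^{\tiny\textcircled{D}}$, which is zero by Lemma 2.2 applied to $s$. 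The $(1,1)$ entry reduces to $aa^{\tiny\textcircled{D}}$ by the same cancellation. Since $aa^{\tiny\textcircled{D}}$ and $ss^{\tiny\textcircled{D}}$ are self-adjoint, $(xz)^*=xz$.

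The second step is $xz^2=z$. Multiplying the diagonal matrix $xz$ by $z$ and using $aa^{\tiny\textcircled{D}}a^{\tiny\textcircled{D}}=a^{\tiny\textcircled{D}}$ and $ss^{\tiny\textcircled{D}}s^{\tiny\textcircled{D}}=s^{\tiny\textcircled{D}}$ returns $z$ entrywise.

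The third step is the product $zx$. We will show that
$$zx=\left(
\begin{array}{cc}
a^{\tiny\textcircled{D}}a&0\\
0&s^{\tiny\textcircled{D}}s
\end{array}
\right).$$
The off-diagonal entries collapse to $a^{\tiny\textcircled{D}}bs^{\tau}$ and $s^{\tiny\textcircled{D}}ca^{\tau}$, both zero by hypothesis. It follows that $1-zx=p$.

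The final step is where the core-EP setting departs from Theorem 3.1. By hypothesis there is $n$ with $px^n=0$. Then $(1-zx)x^n=0$, so $x^n=zx^{n+1}$. Together with the first two steps this shows that $z$ satisfies the defining equations of the core-EP inverse with $k=n$. Therefore $x^{\tiny\textcircled{D}}=z$.

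The main obstacle is the bookkeeping in the entrywise multiplications, particularly the $(1,1)$ entry of $zx$. That entry is
$$a^{\tiny\textcircled{D}}a+a^{\tiny\textcircled{D}}bs^{\tiny\textcircled{D}}ca^{\tiny\textcircled{D}}a-a^{\tiny\textcircled{D}}bs^{\tiny\textcircled{D}}c.$$
Its extra terms combine to $-a^{\tiny\textcircled{D}}bs^{\tiny\textcircled{D}}ca^{\tau}$, and this vanishes by the hypothesis $s^{\tiny\textcircled{D}}ca^{\tau}=0$. The $(1,1)$ entry of $xz$ is handled similarly: there the extra terms combine to $a^{\sigma}bs^{\tiny\textcircled{D}}ca^{\tiny\textcircled{D}}$, which is zero by the first cancellation.
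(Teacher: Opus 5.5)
Your proof is correct and follows essentially the same route as the paper. The block computations of Theorem 3.1 give $xz=\mathrm{diag}(aa^{\tiny\textcircled{D}}, ss^{\tiny\textcircled{D}})$, $xz^2=z$ and $1-zx=p$, and then $px^n=0$ yields $zx^{n+1}=x^n$. The paper instead cites Theorem 3.1 and then upgrades the generalized core-EP inverse, using the mistaken identity $(1-xx^{\tiny\textcircled{d}})x^n=px^n$; you check the defining equations directly with the correct identity $(1-zx)x^n=px^n$, which is slightly cleaner.
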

\begin{proof} In view of Theorem 3.1, $x$ has generalized core-EP inverse and
$$x^{\tiny\textcircled{d}}=\left(
\begin{array}{cc}
a^{\tiny\textcircled{D}}+a^{\tiny\textcircled{D}}bs^{\tiny\textcircled{D}}ca^{\tiny\textcircled{D}}&-a^{\tiny\textcircled{D}}bs^{\tiny\textcircled{D}}\\
-s^{\tiny\textcircled{D}}ca^{\tiny\textcircled{D}}&s^{\tiny\textcircled{D}}
\end{array}
\right).$$ As in the proof of Theorem 3.1, we have $$\begin{array}{rll}
(1-xx^{\tiny\textcircled{d}})x^n&=&\left(
\begin{array}{cc}
1-aa^{\tiny\textcircled{D}}&0\\
0&1-ss^{\tiny\textcircled{D}}
\end{array}
\right)x^n\\
&=&px^n=0.
\end{array}$$ Thus, $(1-x\alpha)x\in M_2(\mathcal{A})^{nil}$.
Accordingly, $x$ has a core-EP inverse and $x^{\tiny\textcircled{D}}=x^{\tiny\textcircled{d}}$, as desired.\end{proof}

\begin{cor} Let $x$ be defined as in $(2)$. If $s:=d-ca^{\tiny\textcircled{D}}b\in \mathcal{A}^{-1}$ and $$a^{\pi}b=0~\mbox{and} ~ca^{\tau}=0,$$ then $x$ has a generalized core-EP inverse. In this case, $$x^{\tiny\textcircled{D}}=\left(
\begin{array}{cc}
a^{\tiny\textcircled{D}}+a^{\tiny\textcircled{D}}bs^{-1}ca^{\tiny\textcircled{D}}&-a^{\tiny\textcircled{D}}bs^{-1}\\
-s^{-1}ca^{\tiny\textcircled{D}}&s^{-1}
\end{array}
\right).$$
\end{cor}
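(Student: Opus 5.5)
The plan is to reduce this to Theorem 3.4, just as Corollary 3.3 was reduced to Corollary 3.2. Since $s=d-ca^{\tiny\textcircled{D}}b\in \mathcal{A}^{-1}$, we have $s^{\tiny\textcircled{D}}=s^{-1}$. Then $ss^{\tiny\textcircled{D}}=s^{\tiny\textcircled{D}}s=1$, so $s^{\pi}=s^{\tau}=s^{\sigma}=0$. With these, the four hypotheses of Theorem 3.4 become
$$a^{\pi}bs^{-1}=0,\quad 0\cdot ca^{\tiny\textcircled{D}}=0,\quad a^{\tiny\textcircled{D}}b\cdot 0=0,\quad s^{-1}ca^{\tau}=0.$$
The second and third hold trivially, and the first and last follow from $a^{\pi}b=0$ and $ca^{\tau}=0$.

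It remains to produce $n$ with $px^n=0$, where now
$$p=\left(\begin{array}{cc}a^{\tau}&0\\0&0\end{array}\right).$$

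\begin{enumerate}
\item By Lemma 2.2 (the core-EP inverse is a generalized core-EP inverse), $a^{\pi}b=0$ gives $a^{\tau}b=0$.
\item Let $k$ be the index with $a^{\tiny\textcircled{D}}a^{k+1}=a^k$. Then $a^{\tau}a^k=a^k-a^{\tiny\textcircled{D}}a^{k+1}=0$.
\item The first row of $px^n$ is $a^{\tau}$ times the first row of $x^n$. Write the first row of $x^{n}$ as $(u_n,v_n)$. The recursion from $x^{n+1}=x^n x$, or $x\cdot x^n$, expresses $u_n,v_n$ as sums of words in $a,b,c,d$.
\item Multiplying on the left by $a^{\tau}$ kills every word that begins with $a^jb$. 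Using $a^{\tau}b=0$, one gets $a^{\tau}(u_n,v_n)=a^{\tau}a^{n}(1,0)\cdot(\text{first row})$, more precisely $a^{\tau}x_{11}^{(n)}=a^{\tau}a^n$ and $a^{\tau}x_{12}^{(n)}=a^{\tau}a^{n-1}b$, which equals $a^{\tau}b$-type terms or zero.
\item To see this, compute the first row of $x\cdot x^{n-1}$: it is $a\cdot(\text{row }1)+b\cdot(\text{row }2)$. Since $a^{\tau}b=0$, we get $a^{\tau}\,\text{row}_1(x^n)=a^{\tau}a\,\text{row}_1(x^{n-1})$.
\item Iterating, and using that $a^{\tau}$ commutes appropriately via $a^{\tau}a\cdot a^{\tau}$, this gives $a^{\tau}\text{row}_1(x^n)=a^{\tau}a^{n}$ applied to the identity row. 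The one subtle point is that $a^{\tau}a\,\text{row}_1$ requires an induction of the form $a^{\tau}a^j\text{row}_1(x^m)$, where $a^{\tau}a^jb=a^{\tau}a^{j}a^{\pi}b$. Since $a^{\pi}b=0$ means $b=aa^db$, we have $a^jb=a^{j}aa^db$. Then $a^{\tau}a^jb=a^{\tau}a^{j+1}a^db$ vanishes once $j+1\ge k$, and for small $j$ it vanishes by iterating $b=a^{k}(a^d)^{k}b$. So in fact $a^{\tau}a^jb=0$ for all $j$.
\item With this, the induction gives $a^{\tau}\text{row}_1(x^n)=(a^{\tau}a^n,\,0)$. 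Hence $px^n=0$ for $n\ge k$.
\end{enumerate}

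Theorem 3.4 then yields the existence of $x^{\tiny\textcircled{D}}$, and substituting $s^{\tiny\textcircled{D}}=s^{-1}$ gives the stated formula. Note that Theorem 3.4 gives a core-EP inverse, which is stronger than the claimed generalized core-EP inverse.

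The main obstacle is step (6), establishing $a^{\tau}a^jb=0$ for all $j$. The cleanest route is to write $b=a^k(a^d)^kb$, which follows from $a^{\pi}b=0$ and $a^d$ commuting with $a$. Then $a^{\tau}a^jb=(a^{\tau}a^{k})a^{j}(a^d)^kb=0$, because $a^{\tau}a^k=0$.
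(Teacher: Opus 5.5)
Your proposal is correct and takes the same route as the paper, which simply calls the corollary obvious from Theorem 3.4. You also supply the check the paper leaves implicit, namely that $px^n=0$ for $n\ge k$, via $a^{\tau}a^jb=(a^{\tau}a^k)a^j(a^d)^kb=0$ and the induction $a^{\tau}\,\mathrm{row}_1(x^n)=(a^{\tau}a^n,0)$; steps (4)--(6) are muddled as written, but your final paragraph together with step (7) completes the argument.
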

\begin{proof} This is obvious by Theorem 3.4.\end{proof}

We now turn to the proof of the following result.

\begin{thm} Let $x$ be defined as in $(1)$. If $$a^{\pi}bs^{\tiny\textcircled{d}}=s^{\pi}ca^{\tiny\textcircled{d}}=a^{\tiny\textcircled{d}}bs^{\tau}=
s^{\tiny\textcircled{d}}ca^{\tau}=0,$$ and
then $x$ has core inverse if and only if $a,s\in \mathcal{A}^{\tiny\textcircled{\#}}$ and $a^{\pi}b=s^{\pi}c=0$.
In this case,
$$x^{\tiny\textcircled{d}}=\left(
\begin{array}{cc}
a^{\tiny\textcircled{\#}}+a^{\tiny\textcircled{\#}}bs^{\tiny\textcircled{\#}}ca^{\tiny\textcircled{\#}}&-a^{\tiny\textcircled{\#}}bs^{\tiny\textcircled{\#}}\\
-s^{\tiny\textcircled{\#}}ca^{\tiny\textcircled{\#}}&s^{\tiny\textcircled{\#}}\end{array}
\right).$$\end{thm}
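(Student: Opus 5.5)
The plan is to route everything through Lemma 2.4: $x$ is core invertible exactly when $x\in M_2(\mathcal{A})^{\tiny\textcircled{d}}$ and $xx^{\tiny\textcircled{d}}x=x$. The computation in the proof of Theorem 3.1 uses only the four vanishing conditions, not the limit condition. Writing $z$ for the displayed matrix, it gives
$$xz=\left(\begin{array}{cc} aa^{\tiny\textcircled{d}}&0\\ 0&ss^{\tiny\textcircled{d}}\end{array}\right),\quad zx=\left(\begin{array}{cc} a^{\tiny\textcircled{d}}a&0\\ 0&s^{\tiny\textcircled{d}}s\end{array}\right),\quad xz^2=z,\quad (xz)^*=xz.$$
So both directions reduce to comparing $xzx$ with $x$. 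Block-wise,
$$xzx=\left(\begin{array}{cc} aa^{\tiny\textcircled{d}}a&aa^{\tiny\textcircled{d}}b\\ ss^{\tiny\textcircled{d}}c&ss^{\tiny\textcircled{d}}d\end{array}\right).$$

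\emph{Sufficiency.} Assume $a,s\in\mathcal{A}^{\tiny\textcircled{\#}}$ and $a^{\pi}b=s^{\pi}c=0$. Then $a^{\tiny\textcircled{d}}=a^{\tiny\textcircled{\#}}$ and $s^{\tiny\textcircled{d}}=s^{\tiny\textcircled{\#}}$, and Lemma 2.2 gives $a^{\sigma}b=0$ and $s^{\sigma}c=0$. Hence $aa^{\tiny\textcircled{d}}a=a$, $aa^{\tiny\textcircled{d}}b=b$ and $ss^{\tiny\textcircled{d}}c=c$. For the last block,
$$ss^{\tiny\textcircled{d}}d=ss^{\tiny\textcircled{d}}(s+ca^{\tiny\textcircled{d}}b)=s+ca^{\tiny\textcircled{d}}b=d.$$
Thus $xzx=x$. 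Combined with $xz^2=z$ and $(xz)^*=xz$, this makes $z$ the core inverse of $x$ directly from the definition, and it equals the displayed formula. No limit condition is needed.

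\emph{Necessity.} Assume $x\in M_2(\mathcal{A})^{\tiny\textcircled{\#}}$. The key claim is that $z=x^{\tiny\textcircled{d}}$; granting it, Lemma 2.4 gives $xzx=x$. Comparing blocks yields:
\begin{itemize}
\item $aa^{\tiny\textcircled{d}}a=a$, so $a\in\mathcal{A}^{\tiny\textcircled{\#}}$ by Lemma 2.4;
\item $aa^{\tiny\textcircled{d}}b=b$, i.e.\ $a^{\sigma}b=0$, hence $a^{\pi}b=0$ by Lemma 2.2;
\item $ss^{\tiny\textcircled{d}}c=c$, hence $s^{\pi}c=0$ by Lemma 2.2;
\item $ss^{\tiny\textcircled{d}}d=d$, which together with $ss^{\tiny\textcircled{d}}c=c$ gives $ss^{\tiny\textcircled{d}}s=s$, so $s\in\mathcal{A}^{\tiny\textcircled{\#}}$ by Lemma 2.4.
\end{itemize}

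\emph{Main obstacle.} The hard step is justifying $z=x^{\tiny\textcircled{d}}$ in the necessity direction, since Theorem 3.1 needed the extra hypothesis $\lim_n\|px^n\|^{1/n}=0$. Let $y=x^{\tiny\textcircled{\#}}$, so $yx^2=x$ and $xy=(xy)^*$. I would prove $zxy=z$ and $zx=yx$, whence $z=zxz=yxz$ and $y=yxy$ force the two to agree.

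For $zx=yx$, note that $xz$ is a self-adjoint idempotent with $xz\,x=\ldots$, and try to show $xy\,xz=xz$ and $xz\,xy=xy$. The first holds because $z=xz^2\in x\mathcal{A}$ and $xyx=x$. The second needs $y\in z\mathcal{A}$, or equivalently $x\in xzx\mathcal{A}$. That is exactly what we want to prove, so it risks being circular.

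My first attempt is instead to verify the limit condition from core invertibility. Since $zx$ is idempotent, $1-zx=p$, so it suffices to show $px^2=0$. I would try to derive this from $x=yx^2$ and the four conditions, writing $px^2=(1-zx)\,yx^3$ and pushing the factor $1-zx$ through using $zx\,yx=yx$. Establishing that last identity is the real sticking point, and I expect to need the condition $a^{\tiny\textcircled{d}}bs^{\tau}=s^{\tiny\textcircled{d}}ca^{\tau}=0$ there.
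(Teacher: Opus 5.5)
\textbf{The necessity direction has a genuine gap, and it cannot be closed.} Your necessity argument rests on $z=x^{\tiny\textcircled{d}}$, which you leave unproved. No argument can supply it, because the ``only if'' half of the statement is false under the stated hypotheses. Take $\mathcal{A}=\mathbb{C}$ with complex conjugation, $a=d=0$ and $b=c=1$. Then $a^{\tiny\textcircled{d}}=0$, $s=d-ca^{\tiny\textcircled{d}}b=0$, $s^{\tiny\textcircled{d}}=0$ and $a^{\pi}=a^{\tau}=s^{\pi}=s^{\tau}=1$. Each of the four hypothesized products contains a zero factor $a^{\tiny\textcircled{d}}$ or $s^{\tiny\textcircled{d}}$, so all four vanish. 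Yet $x=\left(\begin{array}{cc}0&1\\1&0\end{array}\right)$ is unitary, hence core invertible with $x^{\tiny\textcircled{\#}}=x$, while $a^{\pi}b=s^{\pi}c=1\neq 0$ and the displayed matrix is $z=0$. Here $p=I$, so $\|px^n\|=1$ for every $n$, and both identities you were aiming for fail: $px^2=I$, and $zx=0\neq I=yx$.

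The paper's proof gets past this step only by asserting that core invertibility gives $p=1-xx^{\tiny\textcircled{\#}}$, hence $px=0$, so that Theorem 3.1 applies. In the example $1-xx^{\tiny\textcircled{\#}}=0\neq p$, and this is exactly the point you flagged as circular. What does hold is a conditional version. If you add the hypothesis $\lim_{n\to\infty}\|px^n\|^{1/n}=0$ of Theorem 3.1, then $z=x^{\tiny\textcircled{d}}$. Your blockwise reading of $xzx=x$ via Lemmas 2.2 and 2.4 then correctly yields $a,s\in\mathcal{A}^{\tiny\textcircled{\#}}$ and $a^{\pi}b=s^{\pi}c=0$.

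\textbf{A smaller slip in sufficiency.} Your sufficiency argument is essentially the paper's, but the definition of the core inverse requires $zx^2=x$, not $xzx=x$. In a Banach $*$-algebra the equations $xzx=x$, $xz^2=z$, $(xz)^*=xz$ do not by themselves give core invertibility. For the unilateral shift $S$, the pair $x=S^*$, $z=S$ satisfies all three, but $S^*$ is not even group invertible. The repair is immediate. Write $zx^2=\mathrm{diag}(a^{\tiny\textcircled{d}}a,\,s^{\tiny\textcircled{d}}s)\,x$; then $a^{\tiny\textcircled{\#}}a^2=a$, $s^{\tiny\textcircled{\#}}s^2=s$ and $a^{\tau}b=s^{\tau}c=0$ (Lemma 2.2) give $zx^2=x$. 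This is the paper's check $(1-zx)x=0$. Equivalently, Lemma 2.2 applied entrywise turns $(1-xz)x=0$ into $(1-zx)x=0$.
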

\begin{proof} $\Longrightarrow $ Since $x$ is core inverse, we see that
$p=1-xx^{\tiny\textcircled{\#}}$ and then $px=0$. By virtue of Theorem 3.1, $$x^{\tiny\textcircled{\#}}=\left(
\begin{array}{cc}
a^{\tiny\textcircled{d}}+a^{\tiny\textcircled{d}}bs^{\tiny\textcircled{d}}ca^{\tiny\textcircled{d}}&-a^{\tiny\textcircled{d}}bs^{\tiny\textcircled{d}}\\
-s^{\tiny\textcircled{d}}ca^{\tiny\textcircled{d}}&s^{\tiny\textcircled{d}}
\end{array}
\right).$$ As in the proof of Theorem 3.1, we see that
$$1-xx^{\tiny\textcircled{\#}}=\left(
\begin{array}{cc}
1-aa^{\tiny\textcircled{d}}&0\\
0&1-ss^{\tiny\textcircled{d}}
\end{array}
\right).$$ Since $(1-xx^{\tiny\textcircled{\#}})x=0$, we deduce that $(1-aa^{\tiny\textcircled{d}})a=0, (1-aa^{\tiny\textcircled{d}})b=0,
(1-ss^{\tiny\textcircled{d}})c=0$ and $(1-ss^{\tiny\textcircled{d}})d=0$. This implies that
$a,s\in \mathcal{A}^{\tiny\textcircled{\#}}$ and $a^{\pi}b=s^{\pi}c=0$.

$\Longleftarrow$ Let $z=\left(
\begin{array}{cc}
a^{\tiny\textcircled{d}}+a^{\tiny\textcircled{d}}bs^{\tiny\textcircled{d}}ca^{\tiny\textcircled{d}}&-a^{\tiny\textcircled{d}}bs^{\tiny\textcircled{d}}\\
-s^{\tiny\textcircled{d}}ca^{\tiny\textcircled{d}}&s^{\tiny\textcircled{d}}
\end{array}
\right).$ As in the proof of Theorem 3.1, we check that

$$xz^2=z, (xz)^*=xz$$. Moreover, we see that $zx=\left(
\begin{array}{cc}
a^{\tiny\textcircled{d}}a&0\\
0&s^{\tiny\textcircled{d}}s
\end{array}
\right).$ Then $(1-zx)x=0$. This implies that
$x^{\tiny\textcircled{\#}}=z$, as desired.\end{proof}

\begin{cor} Let $x$ be defined as in $(1)$ and assume that $a,s\in \mathcal{A}^{\tiny\textcircled{\#}}$. If
$$a^{\pi}b=s^{\pi}c=a^{\pi}c^*s=s^{\pi}b^*a=0,$$
then $x$ has core inverse.
In this case, $$x^{\tiny\textcircled{\#}}=\left(
\begin{array}{cc}
a^{\tiny\textcircled{\#}}+a^{\tiny\textcircled{\#}}bs^{\tiny\textcircled{\#}}ca^{\tiny\textcircled{\#}}&-a^{\tiny\textcircled{\#}}bs^{\tiny\textcircled{\#}}\\
-s^{\tiny\textcircled{\#}}ca^{\tiny\textcircled{\#}}&s^{\tiny\textcircled{\#}}\end{array}
\right).$$\end{cor}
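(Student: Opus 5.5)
The plan is to deduce the statement from Theorem 3.6 by checking its four vanishing conditions
$$a^{\pi}bs^{\tiny\textcircled{d}}=s^{\pi}ca^{\tiny\textcircled{d}}=a^{\tiny\textcircled{d}}bs^{\tau}=s^{\tiny\textcircled{d}}ca^{\tau}=0 .$$
Since $a,s\in \mathcal{A}^{\tiny\textcircled{\#}}$, Lemma 2.4 gives $a^{\tiny\textcircled{d}}=a^{\tiny\textcircled{\#}}$ and $s^{\tiny\textcircled{d}}=s^{\tiny\textcircled{\#}}$. Once the four conditions hold, the converse direction of Theorem 3.6 applies, because $a,s\in \mathcal{A}^{\tiny\textcircled{\#}}$ and $a^{\pi}b=s^{\pi}c=0$ are hypotheses. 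It yields that $x$ is core invertible with exactly the displayed formula. (That proof shows the matrix $z$ satisfies $xz^2=z$, $(xz)^*=xz$ and $(1-zx)x=0$, so $z=x^{\tiny\textcircled{\#}}$.)

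The first two conditions are immediate: $a^{\pi}bs^{\tiny\textcircled{\#}}=0$ and $s^{\pi}ca^{\tiny\textcircled{\#}}=0$ follow by right multiplication from $a^{\pi}b=0$ and $s^{\pi}c=0$.

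For the other two I would use standard core-inverse identities. Since $a^{\tiny\textcircled{\#}}=a^{\#}aa^{(1,3)}$, we get $a^{\tiny\textcircled{\#}}a=a^{\#}a$, hence $a^{\tau}=a^{\pi}$, and likewise $s^{\tau}=s^{\pi}$. Next, from $a^{\tiny\textcircled{\#}}=a^{\tiny\textcircled{\#}}(aa^{\tiny\textcircled{\#}})^*$ we get
$$a^{\tiny\textcircled{\#}}=a^{\tiny\textcircled{\#}}(a^{\tiny\textcircled{\#}})^*a^*,$$
and similarly for $s$. Thus it suffices to prove $a^*bs^{\pi}=0$ and $s^*ca^{\pi}=0$. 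Taking adjoints of the hypotheses $s^{\pi}b^*a=0$ and $a^{\pi}c^*s=0$ gives $a^*b(s^{\pi})^*=0$ and $s^*c(a^{\pi})^*=0$.

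So everything reduces to replacing $(s^{\pi})^*$ by $s^{\pi}$ (and $(a^{\pi})^*$ by $a^{\pi}$) on the right. I expect this to be the main obstacle. The tool is the relation between the idempotent $s^{\pi}$ and the Hermitian projection $s^{\sigma}=1-ss^{\tiny\textcircled{\#}}$. Using $ss^{\tiny\textcircled{\#}}s=s$ and $ss^{\#}ss^{\tiny\textcircled{\#}}=ss^{\tiny\textcircled{\#}}$, one checks
$$s^{\sigma}s^{\pi}=s^{\sigma}, \qquad s^{\pi}s^{\sigma}=s^{\pi},$$
so $(s^{\pi})^*s^{\sigma}=s^{\sigma}$. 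Hence $a^*bs^{\sigma}=a^*b(s^{\pi})^*s^{\sigma}=0$. It then remains to pass from $s^{\sigma}$ to $s^{\pi}$. By Lemma 2.2 these two idempotents have the same left annihilators, but here they act on the right. Writing $s^{\pi}-s^{\sigma}=-ss^{\#}s^{\sigma}$ gives
$$a^*bs^{\pi}=-a^*bss^{\#}s^{\sigma},$$
and I would try to show this term vanishes using $a^{\pi}b=0$ and the definition $s=d-ca^{\tiny\textcircled{\#}}b$.

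If this last identity cannot be closed in general, the fallback is to interpret the hypotheses $a^{\pi}c^*s=s^{\pi}b^*a=0$ via the Hermitian projections $a^{\sigma}$, $s^{\sigma}$, for which the argument above already yields the needed annihilations. One would then check directly that the matrix $z$ of Theorem 3.1 satisfies $xz^2=z$, $(xz)^*=xz$ and $zx^2=x$. The entrywise products in that direct check involve only $a^{\tiny\textcircled{\#}}$, $s^{\tiny\textcircled{\#}}$ and these annihilation relations.
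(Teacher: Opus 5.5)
\textbf{There is a genuine gap, at exactly the step you flag, and it cannot be closed.} The third and fourth conditions of Theorem 3.6 do not follow from the hypotheses. Your reduction is in fact an equivalence: $a^*=a^*aa^{\tiny\textcircled{\#}}$ gives the converse, so $a^{\tiny\textcircled{\#}}bs^{\tau}=0$ holds iff $a^*bs^{\pi}=0$. On the other hand, $s^{\pi}b^*a=0$ only gives $a^*b(s^{\pi})^*=0$, equivalently $a^*bs^{\sigma}=0$, and nothing forces $a^*bss^{\#}s^{\sigma}=0$.

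For a counterexample, take $\mathcal{A}=\mathbb{C}^{2\times 2}$, $a=1$, $c=0$, $b=e:=\begin{pmatrix}1&0\\0&0\end{pmatrix}$ and $d=s=\begin{pmatrix}1&1\\0&0\end{pmatrix}$. Then $s^2=s=s^{\#}$, $s^{\tiny\textcircled{\#}}=e$ and $s^{\pi}=s^{\tau}=1-s$. All four hypotheses hold: $a^{\pi}=0$, $c=0$, and $s^{\pi}b^*a=(1-s)e=0$ because $se=e$. Yet $a^{\tiny\textcircled{\#}}bs^{\tau}=e(1-s)=e-s\neq 0$ because $es=s$. So Theorem 3.6 is not applicable, and the paper's one-line appeal to it rests on the same unjustified implication.

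Your fallback does not repair this. By Lemma 2.2, $s^{\sigma}b^*a=0$ is equivalent to $s^{\pi}b^*a=0$, so the ``reinterpreted'' hypotheses are the same ones. The off-diagonal entries $a^{\tiny\textcircled{\#}}bs^{\tau}$ and $s^{\tiny\textcircled{\#}}ca^{\tau}$ of $zx$ still need not vanish.

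\textbf{The missing idea is that $zx$ need not be block diagonal for $zx^2=x$ to hold.} Check the matrix $z$ of the statement directly, using only $a,s\in\mathcal{A}^{\tiny\textcircled{\#}}$ and $a^{\pi}b=s^{\pi}c=0$. A direct computation, using $s=d-ca^{\tiny\textcircled{\#}}b$, gives
\[
xz=\begin{pmatrix}aa^{\tiny\textcircled{\#}}-a^{\sigma}bs^{\tiny\textcircled{\#}}ca^{\tiny\textcircled{\#}}&a^{\sigma}bs^{\tiny\textcircled{\#}}\\ s^{\sigma}ca^{\tiny\textcircled{\#}}&ss^{\tiny\textcircled{\#}}\end{pmatrix}
=\begin{pmatrix}aa^{\tiny\textcircled{\#}}&0\\0&ss^{\tiny\textcircled{\#}}\end{pmatrix},
\]
since $a^{\sigma}b=0$ and $s^{\sigma}c=0$ by Lemma 2.2. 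Hence $(xz)^*=xz$, and $xz^2=z$ follows from $aa^{\tiny\textcircled{\#}}a^{\tiny\textcircled{\#}}=a^{\tiny\textcircled{\#}}$ and $ss^{\tiny\textcircled{\#}}s^{\tiny\textcircled{\#}}=s^{\tiny\textcircled{\#}}$.

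Next, with $p=\mathrm{diag}(a^{\tau},s^{\tau})$, one computes
\[
1-zx=\begin{pmatrix}1&-a^{\tiny\textcircled{\#}}b\\0&1\end{pmatrix}\begin{pmatrix}1&0\\-s^{\tiny\textcircled{\#}}c&1\end{pmatrix}p,
\qquad
px=\begin{pmatrix}a^{\tau}a&a^{\tau}b\\ s^{\tau}c&s^{\tau}d\end{pmatrix}.
\]
Each entry of $px$ vanishes:
\begin{enumerate}
\item $a^{\tau}a=0$ by the definition of the core inverse;
\item $a^{\tau}b=0$ and $s^{\tau}c=0$ by Lemma 2.2;
\item $s^{\tau}d=s^{\tau}s+s^{\tau}ca^{\tiny\textcircled{\#}}b=0$.
\end{enumerate}
Therefore $(1-zx)x=0$, i.e.\ $zx^2=x$, and $z=x^{\tiny\textcircled{\#}}$.

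So the statement is true, including in the example above. The hypotheses $a^{\pi}c^*s=s^{\pi}b^*a=0$ are never needed.
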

\begin{proof} Straightforward by Theorem 3.6.\end{proof}

We are ready to prove:

\begin{thm} Let $x$ be defined as in $(1)$. If
$$a^{\pi}bs^{\tiny\textcircled{d}}=a^{\pi}bc=s^{\pi}ca^{\tiny\textcircled{d}}=ss^{\sigma}c=0,$$ then $x$ has generalized core-EP inverse. In this case,
$$x^{\tiny\textcircled{d}}=\left(
\begin{array}{cc}
a^{\tiny\textcircled{d}}+a^{\tiny\textcircled{d}}bs^{\tiny\textcircled{d}}ca^{\tiny\textcircled{d}}&-a^{\tiny\textcircled{d}}bs^{\tiny\textcircled{d}}\\
-s^{\tiny\textcircled{d}}ca^{\tiny\textcircled{d}}&s^{\tiny\textcircled{d}}
\end{array}
\right).$$\end{thm}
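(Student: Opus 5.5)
We take $z$ to be the matrix displayed in the statement and verify the three defining conditions of the generalized core-EP inverse directly, as in the proof of Theorem 3.1. The new hypotheses $a^{\pi}bc=0$ and $ss^{\sigma}c=0$ are not used for the first two conditions. They replace $a^{\tiny\textcircled{d}}bs^{\tau}=s^{\tiny\textcircled{d}}ca^{\tau}=0$ and the growth condition on $px^n$ in the last step, the quasinilpotency condition.

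\textbf{Step 1: $xz$ is a self-adjoint idempotent and $xz^2=z$.} By Lemma 2.2, $a^{\pi}bs^{\tiny\textcircled{d}}=0$ gives $a^{\sigma}bs^{\tiny\textcircled{d}}=0$, and $s^{\pi}ca^{\tiny\textcircled{d}}=0$ gives $s^{\sigma}ca^{\tiny\textcircled{d}}=0$. Multiplying out, using $d=s+ca^{\tiny\textcircled{d}}b$:
\begin{itemize}
\item the $(1,1)$ entry of $xz$ is $aa^{\tiny\textcircled{d}}-a^{\sigma}bs^{\tiny\textcircled{d}}ca^{\tiny\textcircled{d}}=aa^{\tiny\textcircled{d}}$;
\item the $(1,2)$ entry is $a^{\sigma}bs^{\tiny\textcircled{d}}=0$;
\item the $(2,1)$ entry is $s^{\sigma}ca^{\tiny\textcircled{d}}=0$;
\item the $(2,2)$ entry is $ss^{\tiny\textcircled{d}}$.
\end{itemize}
Hence $xz=\mathrm{diag}(aa^{\tiny\textcircled{d}},ss^{\tiny\textcircled{d}})$, which is a self-adjoint idempotent. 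Then $xz^2=(xz)z=z$ follows exactly as in Theorem 3.1.

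\textbf{Step 2: the residual part is quasinilpotent.} Put $w=(1-xz)x=x-xzx$. Since $s^{\sigma}d=s^{\sigma}s+s^{\sigma}ca^{\tiny\textcircled{d}}b=s^{\sigma}s$, we can split
$$w=\left(\begin{array}{cc}a^{\sigma}a&a^{\sigma}b\\ s^{\sigma}c&s^{\sigma}s\end{array}\right)=u+v,\qquad u=\left(\begin{array}{cc}a^{\sigma}a&a^{\sigma}b\\ 0&s^{\sigma}s\end{array}\right),\quad v=\left(\begin{array}{cc}0&0\\ s^{\sigma}c&0\end{array}\right).$$
The matrix $u$ is triangular with quasinilpotent diagonal entries, since $a^{\sigma}a$ and $s^{\sigma}s$ are the quasinilpotent parts from \cite[Theorem 2.1]{C3}. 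Hence $u$ is quasinilpotent, and $v^2=0$. We aim to show $uv=0$, so that $w$ is quasinilpotent by \cite[Lemma 2.1]{CD}. We have
$$uv=\left(\begin{array}{cc}a^{\sigma}bs^{\sigma}c&0\\ s^{\sigma}ss^{\sigma}c&0\end{array}\right).$$
\begin{itemize}
\item The $(2,1)$ entry vanishes because $ss^{\sigma}c=0$.
\item For the $(1,1)$ entry, $a^{\sigma}bs^{\sigma}c=a^{\sigma}bc-a^{\sigma}bss^{\tiny\textcircled{d}}c$. The first term is $0$ by Lemma 2.2 applied to $a^{\pi}bc=0$.
\item For the second term, we factor $ss^{\tiny\textcircled{d}}$ with $s^{\tiny\textcircled{d}}$ on the left. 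From $s^{\sigma}s^{\tiny\textcircled{d}}=0$ and Lemma 2.2 we get $s^{\tiny\textcircled{d}}=ss^ds^{\tiny\textcircled{d}}$. From $s^{\pi}s^d=0$ and Lemma 2.2 we get $s^d=s^{\tiny\textcircled{d}}ss^d$. Hence $ss^{\tiny\textcircled{d}}=s^2s^ds^{\tiny\textcircled{d}}=s^{\tiny\textcircled{d}}(ss^ds^3s^{\tiny\textcircled{d}})$, and therefore $a^{\sigma}bss^{\tiny\textcircled{d}}=0$.
\end{itemize}

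\textbf{Step 3: conclusion.} We write $x=xzx+w$. Here $xzx$ is core invertible with core inverse $z$, which follows from Step 1 together with $z=zxz$ (itself a consequence of $xz^2=z$ and the diagonal form of $zx$ computed as in Theorem 3.1). We then invoke \cite[Theorem 2.1]{C3}. This requires the orthogonality $w\,xzx=0$, or equivalently an estimate showing $\|x^n-zx^{n+1}\|^{1/n}\to 0$ via $(1-zx)x^n$ expressed through powers of $w$.

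\textbf{Main obstacle.} We expect this last step to be the main difficulty. The relation $wz=(1-xz)xz\cdot$ (via idempotency of $xz$) is available, but relating $(1-xz)x^2z$ back to the hypotheses is not automatic. We would first try to compute $zx$ explicitly and show $(1-zx)x^n$ is, up to bounded factors, a product of $w^{n-1}$, using the vanishing products from Step 2.
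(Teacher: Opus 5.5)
Your Steps 1 and 2 are correct. One slip in Step 2: the exponent is off, and the factorization should read $ss^{\tiny\textcircled{d}}=s^{\tiny\textcircled{d}}(ss^ds^{2}s^{\tiny\textcircled{d}})$, or more simply $ss^{\tiny\textcircled{d}}=s^{\tiny\textcircled{d}}s^{2}s^{\tiny\textcircled{d}}$ by Lemma 2.2 applied to $s^{\sigma}(ss^{\tiny\textcircled{d}})=0$.

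The genuine gap is that the proof stops where the theorem has its content. The third defining condition $\lim_{n\to\infty}\|x^n-zx^{n+1}\|^{1/n}=0$ is never established, and quasinilpotency of $w=(1-xz)x$ does not by itself control $(1-zx)x^n$. The missing link is the orthogonality you flagged, and it is easy to supply. Put $P=xz=\mathrm{diag}(aa^{\tiny\textcircled{d}},ss^{\tiny\textcircled{d}})$, so that $(1-xz)x^2z=(1-P)xP$. Using $d=s+ca^{\tiny\textcircled{d}}b$ and $aa^{\tiny\textcircled{d}}=a^{\tiny\textcircled{d}}a^2a^{\tiny\textcircled{d}}$, its four entries are
$$a^{\sigma}a^2a^{\tiny\textcircled{d}},\qquad a^{\sigma}bss^{\tiny\textcircled{d}},\qquad (s^{\sigma}ca^{\tiny\textcircled{d}})a^2a^{\tiny\textcircled{d}},\qquad s^{\sigma}s^2s^{\tiny\textcircled{d}}+(s^{\sigma}ca^{\tiny\textcircled{d}})bss^{\tiny\textcircled{d}}.$$
All of them vanish. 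You proved the second in Step 2, and you have $s^{\sigma}ca^{\tiny\textcircled{d}}=0$ from Step 1. Finally, $a^{\sigma}a^2a^{\tiny\textcircled{d}}=a(a^n-a^{\tiny\textcircled{d}}a^{n+1})(a^{\tiny\textcircled{d}})^n$ for every $n$, which forces it to be $0$; similarly $s^{\sigma}s^2s^{\tiny\textcircled{d}}=0$. Hence $w\,xzx=(1-P)xPx=0$. This is precisely the paper's Step 1, namely $yz=0$ for its splitting $x=x(1-P)+xP$. Since $(1-P)x$ and $x(1-P)$ are quasinilpotent simultaneously, your Step 2 is an equivalent form of the paper's Step 2.

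Step 3 also needs repair, in two places. First, under the present hypotheses $zx$ is not diagonal. Its off-diagonal entries are $a^{\tiny\textcircled{d}}bs^{\tau}$ and $s^{\tiny\textcircled{d}}ca^{\tau}$, which Theorem 3.1 assumed to vanish but this theorem does not. The identity $zxz=z$ still holds, but the reason is $zP=z$. Second, and more importantly, $xz^2=z$, $(xz)^*=xz=(xz)^2$ and $zxz=z$ do not imply that $z$ is the core inverse of $m=xzx$. In $\mathcal{B}(\ell^2)$, $x=S^*$ and $z=S$ satisfy all of them, yet $zm^2=SS^{*2}\neq S^*=m$.

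What you need is $zxP=P$, and it does hold. Every entry of $zx$ other than $a^{\tiny\textcircled{d}}a$ and $s^{\tiny\textcircled{d}}s$ ends in $a^{\tau}$ (first column) or $s^{\tau}$ (second column), and $a^{\tau}aa^{\tiny\textcircled{d}}=0=s^{\tau}ss^{\tiny\textcircled{d}}$ by Lemma 2.2. Right multiplication by $P$ therefore leaves only $a^{\tiny\textcircled{d}}a^2a^{\tiny\textcircled{d}}=aa^{\tiny\textcircled{d}}$ and $s^{\tiny\textcircled{d}}s^2s^{\tiny\textcircled{d}}=ss^{\tiny\textcircled{d}}$. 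Then $zm^2=zxPx=Px=m$.

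With $wm=0$, $w$ quasinilpotent and $m^{\tiny\textcircled{\#}}=z$, \cite[Theorem 2.1]{C3} gives $x^{\tiny\textcircled{d}}=z$. You can also finish directly: $zw=(z-zxz)x=0$, and expanding $x^n=\sum_{i=0}^{n}m^iw^{n-i}$ yields $x^n-zx^{n+1}=(1-zx)w^n$. This is exactly the estimate you were looking for. The paper instead obtains core invertibility of its summand $xP$ from Corollary 3.7.
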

\begin{proof} Obviously, we have a decomposition $x=y+z$, where
$$y=\left(
\begin{array}{cc}
aa^{\sigma}&bs^{\sigma}\\
ca^{\sigma}&ds^{\sigma}
\end{array}
\right), z=\left(
\begin{array}{cc}
a^2a^{\tiny\textcircled{d}}&bss^{\tiny\textcircled{d}}\\
caa^{\tiny\textcircled{d}}&dss^{\tiny\textcircled{d}}
\end{array}
\right).$$

Step 1. $yz=0$. As $s^{\pi}ca^{\tiny\textcircled{d}}=0$, it follows by Lemma 2.2 that $s^{\sigma}ca^{\tiny\textcircled{d}}=0$.
Then $s^{\sigma}caa^{\tiny\textcircled{d}}=[s^{\sigma}ca^{\tiny\textcircled{d}}]a^2a^{\tiny\textcircled{d}}=0$. Similarly, we have
$(aa^{\sigma})(bss^{\tiny\textcircled{d}})=[aa^{\sigma}bs^{\tiny\textcircled{d}}]s^2s^{\tiny\textcircled{d}}=0$. Since $d=s+ca^{\tiny\textcircled{d}}b$, we deduce that $bs^{\sigma}dss^{\tiny\textcircled{d}}=bs^{\sigma}[s+ca^{\tiny\textcircled{d}}b]ss^{\tiny\textcircled{d}}=
b(s^{\sigma}ca^{\tiny\textcircled{d}})bss^{\tiny\textcircled{d}}=0$. Furthermore, we have $[ds^{\sigma}][dss^{\tiny\textcircled{d}}]=
ds^{\sigma}[s+ca^{\tiny\textcircled{d}}b]ss^{\tiny\textcircled{d}}=d[s^{\sigma}ca^{\tiny\textcircled{d}}]bss^{\tiny\textcircled{d}}=0$.
Therefore we verify that $yz=0$.

Step 2. $y\in M_2(\mathcal{A})^{qnil}$. We observe that
$y=y_1+y_2+y_3$, where
$$y_1=\left(
\begin{array}{cc}
aa^{\sigma}&a^{\sigma}bs^{\sigma}\\
0&ss^{\sigma}
\end{array}
\right), y_2=\left(
\begin{array}{cc}
0&0\\
s^{\sigma}ca^{\sigma}&0
\end{array}
\right), y_3=\left(
\begin{array}{cc}
0&aa^{\tiny\textcircled{d}}bs^{\sigma}\\
ss^{\tiny\textcircled{d}}ca^{\sigma}&ss^{\tiny\textcircled{d}}ds^{\sigma}
\end{array}
\right).$$ By hypothesis, we verify that $y_1y_2=y_1y_3=y_2y_3=0$. As $y_1,y_2$ are quasinilpotent, it follows by~\cite[Lemma 2.1]{CD} that
$y_1+y_2$ is quasinilpotent. Obviously,
$y_3^2=\left(
\begin{array}{cc}
0&aa^{\tiny\textcircled{d}}bs^{\sigma}\\
ss^{\tiny\textcircled{d}}ca^{\sigma}&ss^{\tiny\textcircled{d}}ds^{\sigma}
\end{array}
\right)^2=0$. Since $(y_1+y_2)y_3=y_1y_3+y_2y_3=0$, by ~\cite[Lemma 2.1]{CD}, we see that $y_1+y_2+y_3$ is qusinilpotent, as required.

Step 3. $z\in M_2(\mathcal{A})^{\tiny\textcircled{\#}}$.

Obviously, $\big(a^2a^{\tiny\textcircled{d}}\big)^{\tiny\textcircled{\#}}=a^{\tiny\textcircled{d}}.$
Then $\big(a^2a^{\tiny\textcircled{d}}\big)^{\sigma}=1-\big(a^2a^{\tiny\textcircled{d}}\big)a^{\tiny\textcircled{d}}=
1-aa^{\tiny\textcircled{d}}=a^{\sigma}$. Moreover, we have
$$\begin{array}{rll}
S&:=&dss^{\tiny\textcircled{d}}-caa^{\tiny\textcircled{d}}\big(a^2a^{\tiny\textcircled{d}}\big)^{\tiny\textcircled{d}}bss^{\tiny\textcircled{d}}\\
&=&dss^{\tiny\textcircled{d}}-ca(a^{\tiny\textcircled{d}})^2bss^{\tiny\textcircled{d}}\\
&=&\big(d-ca^{\tiny\textcircled{d}}b\big)ss^{\tiny\textcircled{d}}\\
&=&s^2s^{\tiny\textcircled{d}}.
\end{array}$$ Hence, $S\in \mathcal{A}^{\tiny\textcircled{\#}}$ and $S^{\tiny\textcircled{\#}}=s^{\tiny\textcircled{d}}.$
Then $S^{\sigma}=1-SS^{\tiny\textcircled{\#}}=1-ss^{\tiny\textcircled{d}}=s^{\sigma}$.
We verify that
$$\begin{array}{rll}
\big(a^2a^{\tiny\textcircled{d}}\big)^{\pi}\big(bss^{\tiny\textcircled{d}}\big)&=&a^{\pi}bss^{\tiny\textcircled{d}}=0,\\
\big(caa^{\tiny\textcircled{d}}\big)\big(a^2a^{\tiny\textcircled{d}}\big)^{\sigma}&=&\big(caa^{\tiny\textcircled{d}}\big)a^{\sigma}=0,\\
\big(bss^{\tiny\textcircled{d}}\big)S^{\sigma}&=&\big(bss^{\tiny\textcircled{d}}\big)s^{\sigma}=0.\\
S^{\pi}\big(caa^{\tiny\textcircled{d}}\big)&=&s^{\pi}caa^{\tiny\textcircled{d}}=0.\end{array}$$
By virtue of Corollary 3.7, $z\in M_2(\mathcal{A})^{\tiny\textcircled{\#}}$ and

$$z^{\tiny\textcircled{\#}}=\left(
\begin{array}{cc}
a^{\tiny\textcircled{d}}+a^{\tiny\textcircled{d}}bs^{\tiny\textcircled{d}}ca^{\tiny\textcircled{d}}&-a^{\tiny\textcircled{d}}bs^{\tiny\textcircled{d}}\\
-s^{\tiny\textcircled{d}}ca^{\tiny\textcircled{d}}&s^{\tiny\textcircled{d}}\end{array}
\right).$$
By virtue of ~\cite[Theorem 2.1]{C3}, $x^{\tiny\textcircled{d}}=z^{\tiny\textcircled{\#}}$, as desired.\end{proof}

\begin{cor} Let $x$ be defined as in $(2)$. If $$a^{\pi}bs^{\tiny\textcircled{D}}=a^{\pi}bc=s^{\pi}ca^{\tiny\textcircled{D}}=ss^{\sigma}c=0,$$ then $x$ has core-EP inverse. In this case,
$$x^{\tiny\textcircled{D}}=\left(
\begin{array}{cc}
a^{\tiny\textcircled{D}}+a^{\tiny\textcircled{D}}bs^{\tiny\textcircled{D}}ca^{\tiny\textcircled{D}}&-a^{\tiny\textcircled{D}}bs^{\tiny\textcircled{D}}\\
-s^{\tiny\textcircled{D}}ca^{\tiny\textcircled{D}}&s^{\tiny\textcircled{D}}
\end{array}
\right).$$\end{cor}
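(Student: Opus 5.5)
Since a core-EP invertible element is generalized core-EP invertible with the same inverse, Theorem 3.8 applies directly under the stated hypotheses (with $a^{\tiny\textcircled{d}}=a^{\tiny\textcircled{D}}$ and $s^{\tiny\textcircled{d}}=s^{\tiny\textcircled{D}}$). So $x\in M_2(\mathcal{A})^{\tiny\textcircled{d}}$, and $x^{\tiny\textcircled{d}}$ is given by the displayed matrix with $\tiny\textcircled{D}$ in place of $\tiny\textcircled{d}$. What remains is to upgrade this to a core-EP inverse. The criterion I will use is the one implicit in the proof of Theorem 3.4: if $x$ has a generalized core-EP inverse $w$ and $(1-xw)x^n=0$ for some $n$, then $w$ is the core-EP inverse of $x$. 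Equivalently, the quasinilpotent part in the decomposition of \cite[Theorem 2.1]{C3} is nilpotent.

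I will follow the proof of Theorem 3.8 and write $x=y+z$, where $z$ is core invertible with $z^{\tiny\textcircled{\#}}=x^{\tiny\textcircled{d}}$, $y$ is quasinilpotent, and $yz=0$. The task is to show that $y$ is nilpotent. As there, split $y=y_1+y_2+y_3$ with $y_1y_2=y_1y_3=y_2y_3=0$ and $y_3^2=0$.
\begin{itemize}
\item For $y_2$: it is strictly lower triangular, so $y_2^2=0$.
\item For $y_1$: it is upper triangular with diagonal entries $aa^{\sigma}$ and $ss^{\sigma}$. Since $a,s$ are core-EP invertible, $a^{\sigma}a^k=a^k-aa^{\tiny\textcircled{D}}a^k=0$ for large $k$. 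Using $a^{\sigma}a=a^{\sigma}aa^{\sigma}$ (because $a^{\sigma}a\,aa^{\tiny\textcircled{D}}=0$), we get $(aa^{\sigma})^{k+1}=a^{\sigma}\cdot$stuff$\cdot a^{k}$ type expressions that vanish. Hence $aa^{\sigma}$ and, likewise, $ss^{\sigma}$ are nilpotent.
\item A triangular matrix with nilpotent diagonal is nilpotent. One checks this by expanding $y_1^{m}$: the off-diagonal entry is a sum of words containing either a high power of $aa^{\sigma}$ or a high power of $ss^{\sigma}$. So $y_1$ is nilpotent.
\end{itemize}

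Now combine the pieces. If $uv=0$ with $u,v$ nilpotent, then $(u+v)^m=\sum_{i+j=m}v^iu^j$, which vanishes for large $m$. Applying this first to $y_1+y_2$ and then to $(y_1+y_2)+y_3$ shows that $y$ is nilpotent.

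Finally, since $yz=0$, we have $x^n=\sum_{i+j=n} z^iy^j$, and $z^{\tiny\textcircled{\#}}$ annihilates nothing extra. The cleanest conclusion is
\[
(1-xx^{\tiny\textcircled{d}})x^n=(1-zz^{\tiny\textcircled{\#}})(z+y)^n .
\]
Here $(1-zz^{\tiny\textcircled{\#}})z=0$, and every term of $(z+y)^n$ containing a $z$ with a $y$ to its left vanishes because $yz=0$. So only the terms $z^iy^{n-i}$ survive, and $(1-zz^{\tiny\textcircled{\#}})z^iy^{n-i}=0$ for $i\ge1$. Thus $(1-xx^{\tiny\textcircled{d}})x^n=(1-zz^{\tiny\textcircled{\#}})y^n=0$ for large $n$. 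The criterion used in Theorem 3.4 then gives $x^{\tiny\textcircled{D}}=x^{\tiny\textcircled{d}}$.

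The main obstacle is the nilpotency of $aa^{\sigma}$ and $ss^{\sigma}$, together with the resulting nilpotency of the triangular matrix $y_1$. This requires careful use of $a^{\sigma}a^k=0$ in the core-EP setting.
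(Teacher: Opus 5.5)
Your proposal is correct and follows the paper's route (Theorem 3.8, then the upgrade criterion from the proof of Theorem 3.4); it also supplies the step the paper's one-line proof leaves implicit, namely that $(1-xx^{\tiny\textcircled{d}})x^n=(1-zz^{\tiny\textcircled{\#}})y^n$ vanishes because $y$ is nilpotent once $a,s$ are core-EP invertible (the clean identity is $(aa^{\sigma})^{k+1}=a(a^{\sigma}a)^{k}a^{\sigma}=a\,a^{\sigma}a^{k}a^{\sigma}=0$, and likewise for $ss^{\sigma}$). One caution: the splitting printed in the proof of Theorem 3.8 does not literally sum to $y$, since its $(2,2)$ entries give $ss^{\sigma}+ss^{\tiny\textcircled{d}}ds^{\sigma}$, which exceeds $ds^{\sigma}$ by $ss^{\tiny\textcircled{d}}ss^{\sigma}$; take the $(2,2)$ entry of $y_3$ to be $ss^{\tiny\textcircled{d}}ca^{\tiny\textcircled{d}}bs^{\sigma}$ instead, and then $y=y_1+y_2+y_3$, $y_3^2=0$ and $y_1y_2=y_1y_3=y_2y_3=0$ all hold, so your argument goes through unchanged.
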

\begin{proof} As in the proof of Theorem 3.4, we complete the proof by Theorem 3.8.\end{proof}

\begin{cor} Let $x$ be defined as in $(2)$. If $a\in \mathcal{A}^{\tiny\textcircled{\#}}, s\in \mathcal{A}^{-1}$ and $a^{\pi}b=0$, then
$x$ has a core inverse. In this case,
$$x^{\tiny\textcircled{\#}}=\left(
\begin{array}{cc}
a^{\tiny\textcircled{D}}+a^{\tiny\textcircled{D}}bs^{-1}ca^{\tiny\textcircled{D}}&-a^{\tiny\textcircled{D}}bs^{-1}\\
-s^{-1}ca^{\tiny\textcircled{D}}&s^{-1}
\end{array}
\right).$$\end{cor}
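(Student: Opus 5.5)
The plan is to get generalized core-EP invertibility of $x$, together with the explicit formula, from Theorem 3.8, and then to upgrade it to core invertibility by Lemma 2.4. Theorem 3.6 is not suitable here: it would require $s^{-1}ca^{\tau}=0$, i.e.\ $ca^{\tau}=0$, which is not assumed.

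First I record the simplifications coming from the hypotheses. Since $a\in\mathcal{A}^{\tiny\textcircled{\#}}$, we have $a^{\tiny\textcircled{D}}=a^{\tiny\textcircled{d}}=a^{\tiny\textcircled{\#}}$ and $aa^{\tiny\textcircled{\#}}a=a$. Since $s\in\mathcal{A}^{-1}$, we have $s^{\tiny\textcircled{d}}=s^{\tiny\textcircled{D}}=s^{-1}$ and $s^{\pi}=s^{\sigma}=s^{\tau}=0$. From $a^{\pi}b=0$, Lemma 2.2 gives $a^{\sigma}b=0$, that is, $aa^{\tiny\textcircled{\#}}b=b$.

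Next I check the four hypotheses of Theorem 3.8:
\[
a^{\pi}bs^{\tiny\textcircled{d}}=0,\quad a^{\pi}bc=0,\quad s^{\pi}ca^{\tiny\textcircled{d}}=0,\quad ss^{\sigma}c=0.
\]
The first two hold because $a^{\pi}b=0$, and the last two hold because $s^{\pi}=s^{\sigma}=0$. Theorem 3.8 then shows that $x\in M_2(\mathcal{A})^{\tiny\textcircled{d}}$, with $x^{\tiny\textcircled{d}}=w$ equal to the displayed matrix, where $a^{\tiny\textcircled{D}}=a^{\tiny\textcircled{\#}}$ and $s^{-1}$ are substituted.

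By Lemma 2.4, it remains only to show $xwx=x$. I will compute $xw$ directly rather than rely on the computation in Theorem 3.1, whose hypotheses are not all available. The four entries of $xw$ are as follows.
\begin{itemize}
\item The $(1,1)$ entry is $aa^{\tiny\textcircled{\#}}+(aa^{\tiny\textcircled{\#}}b-b)s^{-1}ca^{\tiny\textcircled{\#}}=aa^{\tiny\textcircled{\#}}$.
\item The $(1,2)$ entry is $(b-aa^{\tiny\textcircled{\#}}b)s^{-1}=0$.
\item The $(2,1)$ entry is $ca^{\tiny\textcircled{\#}}+(ca^{\tiny\textcircled{\#}}b-d)s^{-1}ca^{\tiny\textcircled{\#}}=ca^{\tiny\textcircled{\#}}-ss^{-1}ca^{\tiny\textcircled{\#}}=0$.
\item The $(2,2)$ entry is $(d-ca^{\tiny\textcircled{\#}}b)s^{-1}=1$.
\end{itemize}
Hence $xw=\mathrm{diag}(aa^{\tiny\textcircled{\#}},1)$. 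Multiplying by $x$ gives
\[
xwx=\left(\begin{array}{cc} aa^{\tiny\textcircled{\#}}a & aa^{\tiny\textcircled{\#}}b\\ c & d\end{array}\right)=x,
\]
using $aa^{\tiny\textcircled{\#}}a=a$ and $aa^{\tiny\textcircled{\#}}b=b$.

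Lemma 2.4 now gives $x\in M_2(\mathcal{A})^{\tiny\textcircled{\#}}$ and $x^{\tiny\textcircled{\#}}=x^{\tiny\textcircled{d}}=w$, which is the stated formula. The main point requiring care is choosing Theorem 3.8 over Theorem 3.6, since only the former avoids any condition of the form $ca^{\tau}=0$. After that, the one substantive check is the direct computation $xw=\mathrm{diag}(aa^{\tiny\textcircled{\#}},1)$, which uses only $a^{\sigma}b=0$ and $d-ca^{\tiny\textcircled{\#}}b=s$.
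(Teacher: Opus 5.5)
Your proof is correct and is essentially the paper's argument. The paper obtains the core-EP inverse $w$ of $x$ from Corollary 3.9 (the core-EP version of Theorem 3.8, whose hypotheses hold for the reasons you give) and then checks $(1-xw)x=0$ using $1-xw=\mathrm{diag}(a^{\sigma},0)$; this is the same as your computation $xw=\mathrm{diag}(1-a^{\sigma},1)$ combined with Lemma 2.4, and your entrywise derivation of $xw$ simply makes explicit a step the paper only asserts.
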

\begin{proof} In view of Corollary 3.9, we see that $$x^{\tiny\textcircled{D}}=\left(
\begin{array}{cc}
a^{\tiny\textcircled{\#}}+a^{\tiny\textcircled{\#}}bs^{-1}ca^{\tiny\textcircled{\#}}&-a^{\tiny\textcircled{D}}bs^{-1}\\
-s^{-1}ca^{\tiny\textcircled{\#}}&s^{-1}
\end{array}
\right).$$ Then we verify that
$$\begin{array}{rll}
(1-xx^{\tiny\textcircled{D}})x&=&\left(
\begin{array}{cc}
1-aa^{\tiny\textcircled{\#}}&0\\
0&0
\end{array}
\right)\left(
\begin{array}{cc}
a&b\\
c&d
\end{array}
\right)\\
&=&\left(
\begin{array}{cc}
a^{\sigma}a&a^{\sigma}b\\
0&0
\end{array}
\right)=0.
\end{array}$$ Therefore $x$ has core inverse and $x^{\tiny\textcircled{\#}}=x^{\tiny\textcircled{D}}$, as required.\end{proof}

The following result can now be established.

\begin{thm} Let $x$ be defined as in $(1)$. If $$a^{\pi}b=s^{\pi}ca^{\tiny\textcircled{d}}=0,$$ then $x$ has generalized core-EP inverse. In this case,
$$x^{\tiny\textcircled{d}}=\left(
\begin{array}{cc}
a^{\tiny\textcircled{d}}+a^{\tiny\textcircled{d}}bs^{\tiny\textcircled{d}}ca^{\tiny\textcircled{d}}&-a^{\tiny\textcircled{d}}bs^{\tiny\textcircled{d}}\\
-s^{\tiny\textcircled{d}}ca^{\tiny\textcircled{d}}&s^{\tiny\textcircled{d}}\end{array}
\right).$$\end{thm}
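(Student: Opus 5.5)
The plan is to imitate the proof of Theorem 3.8: split $x=y+z$ with the same
$$y=\left(\begin{array}{cc} aa^{\sigma}&bs^{\sigma}\\ ca^{\sigma}&ds^{\sigma}\end{array}\right),\qquad z=\left(\begin{array}{cc} a^2a^{\tiny\textcircled{d}}&bss^{\tiny\textcircled{d}}\\ caa^{\tiny\textcircled{d}}&dss^{\tiny\textcircled{d}}\end{array}\right),$$
show $yz=0$, $y\in M_2(\mathcal{A})^{qnil}$ and $z\in M_2(\mathcal{A})^{\tiny\textcircled{\#}}$, and conclude by \cite[Theorem 2.1]{C3} that $x^{\tiny\textcircled{d}}=z^{\tiny\textcircled{\#}}$. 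The hypothesis $a^{\pi}b=0$ is stronger than $a^{\pi}bs^{\tiny\textcircled{d}}=a^{\pi}bc=0$. By Lemma 2.2 it gives $a^{\sigma}b=0$ and $a^{\tau}b=0$, so every occurrence of $a^{\sigma}b$ simply vanishes. The hypothesis $ss^{\sigma}c=0$ of Theorem 3.8 is no longer available, and the work is to show it is not needed.

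Step 1 ($yz=0$) goes exactly as in Theorem 3.8. We have $s^{\sigma}ca^{\tiny\textcircled{d}}=0$ by Lemma 2.2, hence $s^{\sigma}caa^{\tiny\textcircled{d}}=0$. Also $aa^{\sigma}bss^{\tiny\textcircled{d}}=0$ because $a^{\sigma}b=0$. The two entries involving $s^{\sigma}d\,ss^{\tiny\textcircled{d}}$ vanish via $d=s+ca^{\tiny\textcircled{d}}b$. None of this used $ss^{\sigma}c=0$.

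Step 3 ($z$ core invertible) also carries over: apply Corollary 3.7 to $z$ with corner $a^2a^{\tiny\textcircled{d}}$ and Schur complement $S=s^2s^{\tiny\textcircled{d}}$. The required conditions are $a^{\pi}bss^{\tiny\textcircled{d}}=0$, $caa^{\tiny\textcircled{d}}a^{\sigma}=0$, $bss^{\tiny\textcircled{d}}s^{\sigma}=0$ and $s^{\pi}caa^{\tiny\textcircled{d}}=0$, which hold by $a^{\pi}b=0$, by $a^{\tiny\textcircled{d}}a^{\sigma}=0$, by $s^{\tiny\textcircled{d}}s^{\sigma}=0$, and by Lemma 2.2 respectively. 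The formula for $z^{\tiny\textcircled{\#}}$ is then the displayed matrix, since $(a^2a^{\tiny\textcircled{d}})^{\tiny\textcircled{\#}}=a^{\tiny\textcircled{d}}$ and $S^{\tiny\textcircled{\#}}=s^{\tiny\textcircled{d}}$.

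The main obstacle is Step 2, that $y$ is quasinilpotent, since this is where Theorem 3.8 used $ss^{\sigma}c=0$ (to kill $y_1y_3$ and $y_2y_3$). Using $a^{\sigma}b=0$, the upper right entry of $y$ becomes $bs^{\sigma}=aa^{\tiny\textcircled{d}}bs^{\sigma}$. I would decompose $y$ into $u=\left(\begin{array}{cc} aa^{\sigma}&0\\ ca^{\sigma}&0\end{array}\right)$ and $v=\left(\begin{array}{cc} 0&aa^{\tiny\textcircled{d}}bs^{\sigma}\\ 0&ds^{\sigma}\end{array}\right)$. Then:
\begin{itemize}
\item $vu$ has entries involving $aa^{\tiny\textcircled{d}}bs^{\sigma}ca^{\sigma}$ and $ds^{\sigma}ca^{\sigma}$, which need not vanish. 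So I would rather check $uv=0$, which holds since $a^{\sigma}aa^{\tiny\textcircled{d}}=0$.
\item $u$ is quasinilpotent, because $u^n$ has first column $(aa^{\sigma})^n$, $ca^{\sigma}(aa^{\sigma})^{n-1}$ and $aa^{\sigma}=a-a^2a^{\tiny\textcircled{d}}$ is quasinilpotent.
\item $v$ is quasinilpotent iff $ds^{\sigma}$ is. Writing $ds^{\sigma}=ss^{\sigma}+ca^{\tiny\textcircled{d}}bs^{\sigma}$, I would try to show the second summand times the first is zero, or is nilpotent, using $s^{\sigma}ca^{\tiny\textcircled{d}}=0$. Indeed $(ca^{\tiny\textcircled{d}}bs^{\sigma})^2=ca^{\tiny\textcircled{d}}b(s^{\sigma}ca^{\tiny\textcircled{d}})bs^{\sigma}=0$ and $(ca^{\tiny\textcircled{d}}bs^{\sigma})(ss^{\sigma})=ca^{\tiny\textcircled{d}}b(s^{\sigma}s s^{\sigma})$, which is not obviously zero. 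However $(ss^{\sigma})(ca^{\tiny\textcircled{d}}bs^{\sigma})=s(s^{\sigma}ca^{\tiny\textcircled{d}})bs^{\sigma}=0$, so \cite[Lemma 2.1]{CD} applies with the order reversed.
\end{itemize}
Then \cite[Lemma 2.1]{CD} for $uv=0$ gives $y=u+v$ quasinilpotent. If the product-order conventions of \cite[Lemma 2.1]{CD} clash, I would use the symmetric version (quasinilpotence of $p+q$ when $pq=0$ follows from $\sigma(pq)\cup\{0\}=\sigma(qp)\cup\{0\}$ type arguments), which is standard.
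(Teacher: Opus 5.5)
Your proposal is correct and is essentially the paper's argument. Because $a^{\sigma}b=0$ and $s^{\sigma}ca^{\tiny\textcircled{d}}=0$, your core-invertible summand $z$ coincides with the paper's $z_1$ and your $y$ with its $y+z_2$, and both proofs finish through Corollary 3.7 and \cite[Theorem 2.1]{C3}. The only difference is how you show the remainder is quasinilpotent: the paper uses the lower-triangular $y$ (diagonal $aa^{\sigma}$, $s^{\sigma}s$) plus the square-zero $z_2$ with $yz_2=0$, whereas your column splitting $u+v$ with $uv=0$ and $ds^{\sigma}=ss^{\sigma}+ca^{\tiny\textcircled{d}}bs^{\sigma}$ is an equally valid and more explicit check.
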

\begin{proof} Since $s^{\pi}ca^{\tiny\textcircled{d}}=0$, we deduce that
$$\begin{array}{rll}
d&=&s+ca^{\tiny\textcircled{d}}b\\
&=&s+(ss^{\tiny\textcircled{d}}ca^{\tiny\textcircled{d}})b\\
&=&s+(ss^{\tiny\textcircled{d}})(d-s)\\
&=&s^{\sigma}s+ss^{\tiny\textcircled{d}}d.
\end{array}$$ Then we have a decomposition $x=y+z$, where
$$y=\left(
\begin{array}{cc}
aa^{\sigma}&0\\
s^{\sigma}c&s^{\sigma}s
\end{array}
\right), z=\left(
\begin{array}{cc}
a^2a^{\tiny\textcircled{d}}&aa^{\tiny\textcircled{d}}b\\
ss^{\tiny\textcircled{d}}c&ss^{\tiny\textcircled{d}}d
\end{array}
\right).$$
Moreover, we have $z=z_1+z_2,$ where
$$z_1=\left(
\begin{array}{cc}
a^2a^{\tiny\textcircled{d}}&aa^{\tiny\textcircled{d}}bss^{\tiny\textcircled{d}}\\
ss^{\tiny\textcircled{d}}caa^{\tiny\textcircled{d}}&ss^{\tiny\textcircled{d}}dss^{\tiny\textcircled{d}}
\end{array}
\right), z_2=\left(
\begin{array}{cc}
0&aa^{\tiny\textcircled{d}}bs^{\sigma}\\
ss^{\tiny\textcircled{d}}ca^{\sigma}&ss^{\tiny\textcircled{d}}ds^{\sigma}
\end{array}
\right).$$

Then $x=y+z=z_1+(y+z_2)$.

Obviously, $z_2^2=0$, and then $z_2$ is nilpotent.

We further verify that $y+z_2$ is quasinilpotent and $(y+z_2)z_1=yz_1=y(z-z_2)=-yz_2=0.$

We claim that $z_1\in \mathcal{A}^{\tiny\textcircled{d}}$.

Clearly, $\big(a^2a^{\tiny\textcircled{d}}\big)^{\tiny\textcircled{\#}}=a^{\tiny\textcircled{d}}$ and
$\big(a^2a^{\tiny\textcircled{d}}\big)^{\sigma}=a^{\sigma}$. We compute that
$$\begin{array}{rll}
S&:=&ss^{\tiny\textcircled{d}}dss^{\tiny\textcircled{d}}-
(ss^{\tiny\textcircled{d}}caa^{\tiny\textcircled{d}})\big(a^2a^{\tiny\textcircled{d}}\big)^{\tiny\textcircled{d}}(aa^{\tiny\textcircled{d}}bss^{\tiny\textcircled{d}})\\
&=&ss^{\tiny\textcircled{d}}dss^{\tiny\textcircled{d}}-ss^{\tiny\textcircled{d}}ca(a^{\tiny\textcircled{d}})^2bss^{\tiny\textcircled{d}}\\
&=&ss^{\tiny\textcircled{d}}\big(d-ca^{\tiny\textcircled{d}}b\big)ss^{\tiny\textcircled{d}}\\
&=&s^2s^{\tiny\textcircled{d}}.
\end{array}$$ Hence, $S\in \mathcal{A}^{\tiny\textcircled{\#}}$ and $S^{\tiny\textcircled{\#}}=s^{\tiny\textcircled{d}}.$
Then $S^{\sigma}=1-SS^{\tiny\textcircled{\#}}=s^{\sigma}$.
We check that
$$\begin{array}{rll}
\big(a^2a^{\tiny\textcircled{d}}\big)^{\pi}\big(aa^{\tiny\textcircled{d}}bss^{\tiny\textcircled{d}}\big)&=&a^{\pi}bss^{\tiny\textcircled{d}}=0,\\
\big(ss^{\tiny\textcircled{d}}caa^{\tiny\textcircled{d}}\big)\big(a^2a^{\tiny\textcircled{d}}\big)^{\sigma}&=&
\big(ss^{\tiny\textcircled{d}}caa^{\tiny\textcircled{d}}\big)a^{\sigma}=0,\\
\big(aa^{\tiny\textcircled{d}}bss^{\tiny\textcircled{d}}\big)S^{\sigma}&=&\big(aa^{\tiny\textcircled{d}}bss^{\tiny\textcircled{d}}\big)s^{\sigma}=0.\\
S^{\pi}\big(ss^{\tiny\textcircled{d}}caa^{\tiny\textcircled{d}}\big)&=&s^{\pi}ss^{\tiny\textcircled{d}}caa^{\tiny\textcircled{d}}=0.\end{array}$$
In light of Corollary 3.7, $z_1\in M_2(\mathcal{A})^{\tiny\textcircled{\#}}$ and

$$z_1^{\tiny\textcircled{\#}}=\left(
\begin{array}{cc}
a^{\tiny\textcircled{d}}+a^{\tiny\textcircled{d}}bs^{\tiny\textcircled{d}}ca^{\tiny\textcircled{d}}&-a^{\tiny\textcircled{d}}bs^{\tiny\textcircled{d}}\\
-s^{\tiny\textcircled{d}}ca^{\tiny\textcircled{d}}&s^{\tiny\textcircled{d}}\end{array}
\right).$$
By virtue of ~\cite[Theorem 2.1]{C3}, $x$ has a generalized core-EP inverse and $x^{\tiny\textcircled{d}}=z_1^{\tiny\textcircled{\#}}$, as asserted.\end{proof}

\begin{cor} Let $x$ be defined as in $(1)$. If $$a^{\pi}b=s^{\pi}ca^{\tiny\textcircled{d}}=0,$$ then $x$ has core inverse if and only if $a,s\in \mathcal{A}^{\tiny\textcircled{\#}}$ and $s^{\pi}c=0$. In this case,
$$x^{\tiny\textcircled{\#}}=\left(
\begin{array}{cc}
a^{\tiny\textcircled{\#}}+a^{\tiny\textcircled{\#}}bs^{\tiny\textcircled{\#}}ca^{\tiny\textcircled{\#}}&-a^{\tiny\textcircled{\#}}bs^{\tiny\textcircled{\#}}\\
-s^{\tiny\textcircled{\#}}ca^{\tiny\textcircled{\#}}&s^{\tiny\textcircled{\#}}\end{array}
\right).$$\end{cor}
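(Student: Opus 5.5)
Under the standing hypothesis $a^{\pi}b=s^{\pi}ca^{\tiny\textcircled{d}}=0$, Theorem 3.11 already gives $x\in M_2(\mathcal{A})^{\tiny\textcircled{d}}$ with $x^{\tiny\textcircled{d}}=z$, where $z$ is the displayed block matrix built from $a^{\tiny\textcircled{d}}$ and $s^{\tiny\textcircled{d}}$. Lemma 2.4 then says that $x$ is core invertible exactly when $xx^{\tiny\textcircled{d}}x=x$, that is, when $(1-xz)x=0$. So the plan is to compute $xz$ explicitly and read both implications off the single equation $(1-xz)x=0$.

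\textbf{Computing $xz$.} I would redo the multiplication from the proof of Theorem 3.1 using only the present hypotheses. From $a^{\pi}b=0$ and Lemma 2.2 we get $a^{\tau}b=a^{\sigma}b=0$, so $aa^{\tiny\textcircled{d}}b=b$. This makes the $(1,1)$ and $(1,2)$ entries of $xz$ equal to $aa^{\tiny\textcircled{d}}$ and $0$. For the second row, write $d=s+ca^{\tiny\textcircled{d}}b$. The $(2,2)$ entry is then $-ca^{\tiny\textcircled{d}}bs^{\tiny\textcircled{d}}+(s+ca^{\tiny\textcircled{d}}b)s^{\tiny\textcircled{d}}=ss^{\tiny\textcircled{d}}$. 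The $(2,1)$ entry is
$ca^{\tiny\textcircled{d}}+ca^{\tiny\textcircled{d}}bs^{\tiny\textcircled{d}}ca^{\tiny\textcircled{d}}-ds^{\tiny\textcircled{d}}ca^{\tiny\textcircled{d}}=ca^{\tiny\textcircled{d}}-ss^{\tiny\textcircled{d}}ca^{\tiny\textcircled{d}}=s^{\sigma}ca^{\tiny\textcircled{d}}$, and this vanishes by Lemma 2.2 since $s^{\pi}ca^{\tiny\textcircled{d}}=0$. Hence
$1-xz=\mathrm{diag}(a^{\sigma},s^{\sigma})$, which is the expected form.

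\textbf{Reading off the equivalence.} The condition $(1-xz)x=0$ now amounts to four equations: $a^{\sigma}a=0$, $a^{\sigma}b=0$, $s^{\sigma}c=0$ and $s^{\sigma}d=0$.

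For ($\Rightarrow$):
\begin{itemize}
\item $a^{\sigma}a=0$ gives $aa^{\tiny\textcircled{d}}a=a$, so $a\in\mathcal{A}^{\tiny\textcircled{\#}}$ by Lemma 2.4.
\item $s^{\sigma}c=0$ gives $s^{\pi}c=0$ by Lemma 2.2.
\item Then $s^{\sigma}d=s^{\sigma}s+s^{\sigma}ca^{\tiny\textcircled{d}}b=s^{\sigma}s$, so $s^{\sigma}d=0$ gives $ss^{\tiny\textcircled{d}}s=s$, and $s\in\mathcal{A}^{\tiny\textcircled{\#}}$ by Lemma 2.4.
\end{itemize}

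For ($\Leftarrow$), run the same steps backwards:
\begin{itemize}
\item Core invertibility of $a$ and $s$ gives $a^{\sigma}a=0$ and $s^{\sigma}s=0$, since then $a^{\tiny\textcircled{d}}=a^{\tiny\textcircled{\#}}$ and $s^{\tiny\textcircled{d}}=s^{\tiny\textcircled{\#}}$.
\item $a^{\sigma}b=0$ holds already.
\item $s^{\pi}c=0$ gives $s^{\sigma}c=0$ by Lemma 2.2, and hence $s^{\sigma}d=s^{\sigma}s=0$.
\end{itemize}
So $(1-xz)x=0$, and Lemma 2.4 yields $x^{\tiny\textcircled{\#}}=x^{\tiny\textcircled{d}}=z$. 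Replacing $a^{\tiny\textcircled{d}}$ and $s^{\tiny\textcircled{d}}$ by $a^{\tiny\textcircled{\#}}$ and $s^{\tiny\textcircled{\#}}$ gives the displayed formula.

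The only delicate step is the $(2,1)$ entry of $xz$. It is where $s^{\pi}ca^{\tiny\textcircled{d}}=0$ has to be converted into $s^{\sigma}ca^{\tiny\textcircled{d}}=0$ through Lemma 2.2. That conversion needs care, because Lemma 2.2 is stated for $a^{\pi}b$ with $b$ on the right, and here we apply it with $s$ in place of $a$ and $ca^{\tiny\textcircled{d}}$ in place of $b$. Everything else is routine block multiplication.
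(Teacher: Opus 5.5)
Your proof is correct and is essentially the paper's argument: take $x^{\tiny\textcircled{d}}=z$ from Theorem 3.11 (the paper's citation of ``Theorem 3.1'' is a slip), compute $xz=\mathrm{diag}(aa^{\tiny\textcircled{d}},ss^{\tiny\textcircled{d}})$, and read the equivalence off $(1-xz)x=0$ via Lemmas 2.2 and 2.4. Your block computations, including $s^{\sigma}d=s^{\sigma}s$, supply the details the paper's proof leaves garbled, and the step $s^{\pi}ca^{\tiny\textcircled{d}}=0\Rightarrow s^{\sigma}ca^{\tiny\textcircled{d}}=0$ is a direct instance of Lemma 2.2, so it needs no extra care.
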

\begin{proof} By virtue of Theorem 3.1, $x$ has generalized core-EP inverse. In this case,
$$x^{\tiny\textcircled{d}}=\left(
\begin{array}{cc}
a^{\tiny\textcircled{d}}+a^{\tiny\textcircled{d}}bs^{\tiny\textcircled{d}}ca^{\tiny\textcircled{d}}&-a^{\tiny\textcircled{d}}bs^{\tiny\textcircled{d}}\\
-s^{\tiny\textcircled{d}}ca^{\tiny\textcircled{d}}&s^{\tiny\textcircled{d}}\end{array}
\right).$$ We directly check that

$$\begin{array}{rll}
xx^{\tiny\textcircled{d}}&=&\left(
\begin{array}{cc}
a&b\\
c&d
\end{array}
\right)\left(
\begin{array}{cc}
a^{\tiny\textcircled{d}}+a^{\tiny\textcircled{d}}bs^{\tiny\textcircled{d}}ca^{\tiny\textcircled{d}}&-a^{\tiny\textcircled{d}}bs^{\tiny\textcircled{d}}\\
-s^{\tiny\textcircled{d}}ca^{\tiny\textcircled{d}}&s^{\tiny\textcircled{d}}\end{array}
\right)\\
&=&\left(
\begin{array}{cc}
1-aa^{\tiny\textcircled{d}}&0\\
0&1-ss^{\tiny\textcircled{d}}
\end{array}
\right).
\end{array}$$ Hence, $xx^{\tiny\textcircled{d}}x=x$ if and only if
$$\begin{array}{rll}
\left(
\begin{array}{cc}
a^{\sigma}a&a^{\sigma}b\\
s^{\sigma}c&\sigma d
\end{array}
\right).
\end{array}$$ Hence, $xx^{\tiny\textcircled{d}}x=x$ if and only if $a,s\in \mathcal{A}^{\tiny\textcircled{\#}}$ and
$s^{\pi}c=0$.
Therefore we complete the proof by Lemma 2.4.\end{proof}

\begin{thm} Let $x$ be defined as in $(1)$. If $$a^{\pi}b=s^{\pi}b^*=ss^{\sigma}ca^{\tiny\textcircled{d}}=0,$$ then $x$ has generalized core-EP inverse. In this case, $$x^{\tiny\textcircled{d}}=\left(
\begin{array}{cc}
a^{\tiny\textcircled{d}}+a^{\tiny\textcircled{d}}bs^{\tiny\textcircled{d}}ca^{\tiny\textcircled{d}}&-a^{\tiny\textcircled{d}}bs^{\tiny\textcircled{d}}\\
-s^{\tiny\textcircled{d}}ca^{\tiny\textcircled{d}}&s^{\tiny\textcircled{d}}
\end{array}
\right).$$\end{thm}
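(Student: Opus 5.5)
We follow the pattern of Theorems 3.8 and 3.11: split $x=y+z$ with $z$ core invertible, $y$ quasinilpotent and $yz=0$, and then invoke \cite[Theorem 2.1]{C3} to get $x^{\tiny\textcircled{d}}=z^{\tiny\textcircled{\#}}$.

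\textbf{Preliminary identities.} Since $a^{\pi}b=0$, Lemma 2.2 gives $a^{\sigma}b=0$ and $a^{\tau}b=0$, so $b=aa^{\tiny\textcircled{d}}b=a^{\tiny\textcircled{d}}ab$. The condition $s^{\pi}b^*=0$ together with Lemma 2.2 gives $s^{\sigma}b^*=0$. Because $s^{\sigma}$ is a projection, taking adjoints yields $bs^{\sigma}=0$, that is, $b=bss^{\tiny\textcircled{d}}$.

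\textbf{The decomposition.} We write $d=s+ca^{\tiny\textcircled{d}}b$ and use $bs^{\sigma}=0$ to get $ds^{\sigma}=ss^{\sigma}+ca^{\tiny\textcircled{d}}bs^{\sigma}=ss^{\sigma}$. We then take
$$y=\left(\begin{array}{cc} aa^{\sigma}&0\\ ca^{\sigma}&ss^{\sigma}\end{array}\right),\qquad z=\left(\begin{array}{cc} a^2a^{\tiny\textcircled{d}}&b\\ caa^{\tiny\textcircled{d}}&dss^{\tiny\textcircled{d}}\end{array}\right),$$
using $b=bss^{\tiny\textcircled{d}}$ and $d=dss^{\tiny\textcircled{d}}+ss^{\sigma}$, so that $x=y+z$.

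\textbf{Checking $yz=0$.}
\begin{enumerate}
\item[(1)] First row: $aa^{\sigma}a^2a^{\tiny\textcircled{d}}=0$ because $a^{\sigma}aa^{\tiny\textcircled{d}}=0$, and $aa^{\sigma}b=0$.
\item[(2)] Second row, first column: $ca^{\sigma}a^2a^{\tiny\textcircled{d}}=0$. The remaining term is $ss^{\sigma}caa^{\tiny\textcircled{d}}=(ss^{\sigma}ca^{\tiny\textcircled{d}})a^2a^{\tiny\textcircled{d}}=0$ by hypothesis.
\item[(3)] Second row, second column: $ca^{\sigma}b=0$. The remaining term is $ss^{\sigma}dss^{\tiny\textcircled{d}}=ss^{\sigma}(s+ca^{\tiny\textcircled{d}}b)ss^{\tiny\textcircled{d}}$. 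Here $ss^{\sigma}s\,ss^{\tiny\textcircled{d}}=0$, since $s^{\sigma}s^2s^{\tiny\textcircled{d}}=(1-ss^{\tiny\textcircled{d}})ss^{\tiny\textcircled{d}}s=0$, and the other piece vanishes because $ss^{\sigma}ca^{\tiny\textcircled{d}}=0$.
\end{enumerate}

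\textbf{Quasinilpotence of $y$.} The matrix $y$ is lower triangular with quasinilpotent diagonal entries $aa^{\sigma}$ and $ss^{\sigma}$, which are quasinilpotent by \cite[Theorem 2.1]{C3}. We split $y=\mathrm{diag}(aa^{\sigma},ss^{\sigma})+\left(\begin{array}{cc}0&0\\ ca^{\sigma}&0\end{array}\right)$. The second summand squares to zero and the product in one order vanishes, since $\mathrm{diag}(aa^{\sigma},ss^{\sigma})\cdot(\text{lower part})=0$ as its only entry is $ss^{\sigma}\cdot0$. We need the order required by \cite[Lemma 2.1]{CD}. If that order fails, we instead use the standard fact that a triangular operator matrix with quasinilpotent diagonal is quasinilpotent.

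\textbf{Core invertibility of $z$ (main obstacle).} Following Step 3 of Theorem 3.8, we put $a_1=a^2a^{\tiny\textcircled{d}}$, so that $a_1^{\tiny\textcircled{\#}}=a^{\tiny\textcircled{d}}$ and $a_1^{\sigma}=a^{\sigma}$. The Schur complement is
$$S=dss^{\tiny\textcircled{d}}-caa^{\tiny\textcircled{d}}a^{\tiny\textcircled{d}}b=(d-ca^{\tiny\textcircled{d}}b)ss^{\tiny\textcircled{d}}=s^2s^{\tiny\textcircled{d}},$$
using $aa^{\tiny\textcircled{d}}a^{\tiny\textcircled{d}}=a^{\tiny\textcircled{d}}$. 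Hence $S^{\tiny\textcircled{\#}}=s^{\tiny\textcircled{d}}$ and $S^{\sigma}=s^{\sigma}$. We then verify the hypotheses of Corollary 3.7 (or directly those of Theorem 3.6):
\begin{enumerate}
\item[(1)] $a_1^{\pi}b=a^{\pi}b=0$;
\item[(2)] $S^{\pi}caa^{\tiny\textcircled{d}}=0$, which should follow from $ss^{\sigma}ca^{\tiny\textcircled{d}}=0$, perhaps via $s^{\pi}=s^{\pi}ss^{d}\ldots$ or Lemma 2.2 applied to $S$;
\item[(3)] the starred conditions $a^{\pi}(caa^{\tiny\textcircled{d}})^*S=0$ and $S^{\pi}b^*a_1=0$, where the second uses $s^{\pi}b^*=0$.
\end{enumerate}
This bookkeeping is where we expect difficulty, in particular deriving $S^{\pi}(caa^{\tiny\textcircled{d}})=0$ from $ss^{\sigma}ca^{\tiny\textcircled{d}}=0$. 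If it fails, we would verify directly that the displayed $Z$ satisfies $zZ^2=Z$, $(zZ)^*=zZ$ and $Zz^2=z$, computing $zZ=\mathrm{diag}(aa^{\tiny\textcircled{d}},ss^{\tiny\textcircled{d}})$. The resulting expression for $z^{\tiny\textcircled{\#}}$ is the stated matrix, and \cite[Theorem 2.1]{C3} then finishes the proof.
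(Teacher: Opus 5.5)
Your decomposition $x=y+z$ is the same as the paper's. The parts $x=y+z$, $yz=0$ and $y\in M_2(\mathcal{A})^{qnil}$ hold, but two of your justifications need repair.

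\begin{itemize}
\item The identity $s^2s^{\tiny\textcircled{d}}=ss^{\tiny\textcircled{d}}s$ is false in general. The facts $a^{\sigma}a^2a^{\tiny\textcircled{d}}=0$ and $s^{\sigma}s^2s^{\tiny\textcircled{d}}=0$ are nevertheless true. Since $a^{\sigma}a^{\tiny\textcircled{d}}=0$ and $a^{\tiny\textcircled{d}}=a^n(a^{\tiny\textcircled{d}})^{n+1}$, we get $\|a^{\sigma}a^2a^{\tiny\textcircled{d}}\|^{1/n}=\|a^{\sigma}(a^{n+2}-a^{\tiny\textcircled{d}}a^{n+3})(a^{\tiny\textcircled{d}})^{n+1}\|^{1/n}\to 0$, which forces $a^{\sigma}a^2a^{\tiny\textcircled{d}}=0$.
\item The product $\mathrm{diag}(aa^{\sigma},ss^{\sigma})$ times the strictly lower part has $(2,1)$ entry $ss^{\sigma}ca^{\sigma}$, not $0$. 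So it is your fallback that proves $y$ quasinilpotent: a triangular matrix over $\mathcal{A}$ with quasinilpotent diagonal is quasinilpotent.
\end{itemize}

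The real problem is the step you flagged, and it cannot be repaired. Compute directly, using $aa^{\tiny\textcircled{d}}b=b$, $a(a^{\tiny\textcircled{d}})^2=a^{\tiny\textcircled{d}}$, $s(s^{\tiny\textcircled{d}})^2=s^{\tiny\textcircled{d}}$ and $d=s+ca^{\tiny\textcircled{d}}b$. This gives
$$zZ=\left(\begin{array}{cc} aa^{\tiny\textcircled{d}}&0\\ s^{\sigma}ca^{\tiny\textcircled{d}}&ss^{\tiny\textcircled{d}}\end{array}\right),$$
not $\mathrm{diag}(aa^{\tiny\textcircled{d}},ss^{\tiny\textcircled{d}})$ as you predicted. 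Hence $(zZ)^*=zZ$ requires $s^{\sigma}ca^{\tiny\textcircled{d}}=0$, equivalently $s^{\pi}ca^{\tiny\textcircled{d}}=0$ by Lemma 2.2. This is also exactly what your condition $S^{\pi}(caa^{\tiny\textcircled{d}})=0$ amounts to.

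The hypothesis $ss^{\sigma}ca^{\tiny\textcircled{d}}=0$ does not give this, and the statement itself is false. Take $\mathcal{A}=\mathbb{C}$, $a=c=1$, $b=d=0$.
\begin{itemize}
\item Then $s=0$, $s^{\tiny\textcircled{d}}=0$ and $s^{\pi}=s^{\sigma}=1$, so all three hypotheses hold.
\item The claimed formula gives $x^{\tiny\textcircled{d}}=\mathrm{diag}(1,0)$.
\item But $x=\left(\begin{array}{cc}1&0\\1&0\end{array}\right)$ is idempotent, with $x^{\tiny\textcircled{d}}=x^{\tiny\textcircled{\#}}=\frac12\left(\begin{array}{cc}1&1\\1&1\end{array}\right)$. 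Indeed $x\,\mathrm{diag}(1,0)=x$ is not Hermitian.
\end{itemize}
Here $y=0$ and $z=x$, so your decomposition is harmless; only the claimed value of $z^{\tiny\textcircled{\#}}$ is wrong.

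The paper's proof has the same hole. It appeals to the proof of Theorem 3.11, where the lower-left block was $ss^{\tiny\textcircled{d}}caa^{\tiny\textcircled{d}}$, so the $S^{\pi}$-condition held automatically. Here the lower-left block is $caa^{\tiny\textcircled{d}}$, and the condition fails.

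If the third hypothesis is strengthened to $s^{\pi}ca^{\tiny\textcircled{d}}=0$, your argument closes: one checks $zZ=Zz=\mathrm{diag}(aa^{\tiny\textcircled{d}},ss^{\tiny\textcircled{d}})$, $zZ^2=Z$ and $Zz^2=z$. However, that case is already contained in Theorem 3.11.
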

\begin{proof} Since $bs^{\sigma}=0$, we see that $$\begin{array}{rll}
ss^{\sigma}+dss^{\tiny\textcircled{d}}&=&s[1-ss^{\tiny\textcircled{d}}]+[s+ca^{\tiny\textcircled{d}}b]ss^{\tiny\textcircled{d}}\\
&=&s+ca^{\tiny\textcircled{d}}(bss^{\tiny\textcircled{d}})\\
&=&s+ca^{\tiny\textcircled{d}}b=d.
\end{array}$$ Write $x=y+z$, where
$$y=\left(
\begin{array}{cc}
aa^{\sigma}&0\\
ca^{\sigma}&ss^{\sigma}
\end{array}
\right), z=\left(
\begin{array}{cc}
a^2a^{\tiny\textcircled{d}}&bss^{\tiny\textcircled{d}}\\
caa^{\tiny\textcircled{d}}&dss^{\tiny\textcircled{d}}
\end{array}
\right).$$ Clearly, $a^{\sigma}b=0$ and $$ss^{\sigma}dss^{\tiny\textcircled{d}}=ss^{\sigma}
[s-ca^{\tiny\textcircled{d}}b]ss^{\tiny\textcircled{d}}=-ss^{\sigma}ca^{\tiny\textcircled{d}}bss^{\sigma}=0.$$
Then $yz=0$ and $y$ is quasinilpotent.

As in the proof of Theorem 3.11, $z$ has core inverse and $$z^{\tiny\textcircled{\#}}=\left(
\begin{array}{cc}
a^{\tiny\textcircled{d}}+a^{\tiny\textcircled{d}}bs^{\tiny\textcircled{d}}ca^{\tiny\textcircled{d}}&-a^{\tiny\textcircled{d}}bs^{\tiny\textcircled{d}}\\
-s^{\tiny\textcircled{d}}ca^{\tiny\textcircled{d}}&s^{\tiny\textcircled{d}}
\end{array}
\right).$$ According to~\cite[Theorem 2.1]{C3}, we derive that
 $x$ has generalized core-EP inverse and $x^{\tiny\textcircled{d}}=z^{\tiny\textcircled{\#}},$ as asserted.\end{proof}

\begin{cor} Let $x$ be defined as in $(1)$. If $a,s\in \mathcal{A}^{\tiny\textcircled{\#}}$ and $a^{\pi}b=s^{\pi}b^*=s^{\pi}c=0,$ then $x$ has core inverse. In this case,
$$x^{\tiny\textcircled{\#}}=\left(
\begin{array}{cc}
a^{\tiny\textcircled{\#}}+a^{\tiny\textcircled{\#}}bs^{\tiny\textcircled{\#}}ca^{\tiny\textcircled{\#}}&-a^{\tiny\textcircled{\#}}bs^{\tiny\textcircled{\#}}\\
-s^{\tiny\textcircled{\#}}ca^{\tiny\textcircled{\#}}&s^{\tiny\textcircled{\#}}
\end{array}
\right).$$\end{cor}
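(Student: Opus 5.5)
The plan is to derive this from Theorem 3.13 and then upgrade the generalized core-EP inverse to a core inverse via Lemma 2.4, following the pattern of Corollaries 3.10 and 3.12.

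First we check the hypotheses of Theorem 3.13. We have $a^{\pi}b=s^{\pi}b^*=0$ by assumption. For the remaining condition, $s^{\pi}c=0$ gives $s^{\sigma}c=0$ by Lemma 2.2. Hence $ss^{\sigma}ca^{\tiny\textcircled{d}}=0$. Since $a,s\in\mathcal{A}^{\tiny\textcircled{\#}}$, Lemma 2.4 yields $a^{\tiny\textcircled{d}}=a^{\tiny\textcircled{\#}}$ and $s^{\tiny\textcircled{d}}=s^{\tiny\textcircled{\#}}$. Theorem 3.13 then shows that $x\in M_2(\mathcal{A})^{\tiny\textcircled{d}}$, with $x^{\tiny\textcircled{d}}=z$, where $z$ is the displayed matrix with $\tiny\textcircled{\#}$ in place of $\tiny\textcircled{d}$.

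Next we verify $xzx=x$, so that Lemma 2.4 gives $x^{\tiny\textcircled{\#}}=z$. The key computation is
\[
xz=\left(\begin{array}{cc} aa^{\tiny\textcircled{\#}}&0\\ 0&ss^{\tiny\textcircled{\#}}\end{array}\right).
\]
To obtain it, we multiply out the blocks as in the proof of Theorem 3.1. The ingredients are the relation $d=s+ca^{\tiny\textcircled{\#}}b$ and the identity $a^{\sigma}b=0$, which gives $aa^{\tiny\textcircled{\#}}b=b$. We also use that $s^{\pi}b^*=0$ implies $bs^{\sigma}=0$, since $s^{\sigma}b^*=0$ by Lemma 2.2 and $s^{\sigma}$ is self-adjoint. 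This gives $bss^{\tiny\textcircled{\#}}=b$.

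Checking these block entries is where I expect most of the care to be needed. For instance, the $(1,2)$ entry is $-aa^{\tiny\textcircled{\#}}bs^{\tiny\textcircled{\#}}+bs^{\tiny\textcircled{\#}}=0$. The $(2,2)$ entry is $-ca^{\tiny\textcircled{\#}}bs^{\tiny\textcircled{\#}}+ds^{\tiny\textcircled{\#}}=ss^{\tiny\textcircled{\#}}$. The other entries are handled similarly.

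With this in hand,
\[
(1-xz)x=\left(\begin{array}{cc} a^{\sigma}a&a^{\sigma}b\\ s^{\sigma}c&s^{\sigma}d\end{array}\right).
\]
Here $a^{\sigma}a=0$ and $s^{\sigma}s=0$ because $a$ and $s$ are core invertible, and $a^{\sigma}b=s^{\sigma}c=0$ from above. Finally, $s^{\sigma}d=s^{\sigma}s+s^{\sigma}ca^{\tiny\textcircled{\#}}b=0$. Hence $xzx=x$, and Lemma 2.4 gives that $x$ is core invertible with $x^{\tiny\textcircled{\#}}=x^{\tiny\textcircled{d}}=z$.
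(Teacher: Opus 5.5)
Your proposal is correct and follows the route the paper intends (its proof only says ``straightforward by Theorem 3.13''): Theorem 3.13 identifies $z$ as the generalized core-EP inverse of $x$, and then checking $xzx=x$ and applying Lemma 2.4 upgrades $z$ to the core inverse, exactly as in Corollary 3.12. One small bookkeeping point: the $(2,1)$ entry of $xz$ reduces to $s^{\sigma}ca^{\tiny\textcircled{\#}}$, so the fact needed there is $s^{\sigma}c=0$, which you already have, while $bs^{\sigma}=0$ is not used in computing $xz$.
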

\begin{proof} Straightforward by Theorem 3.13.\end{proof}

\section{numerical examples}

We take the involution on ${\Bbb C}^{n\times n}$ as the conjugate transpose. We now illustrate Theorem 3.4 and Corollary 3.7 by the following numerical examples.

\begin{exam}\end{exam} Let the $8 \times 8$ matrix $x$ be given by:
\[
x = \left(
\begin{array}{cc}
a & b \\
c & d
\end{array}
\right) = \left(
\begin{array}{cccc|cccc}
0&1&0&0&1&0&1&0\\
1&0&0&0&0&1&0&1\\
0&0&0&1&0&0&0&0\\
0&0&0&0&0&0&0&0\\
\hline
1&0&0&0&0&0&1&0\\
0&1&0&0&0&0&0&1\\
1&0&0&0&0&0&0&0\\
0&1&0&0&0&0&0&0\\
\end{array}
\right)\in {\Bbb C}^{8\times 8}.
\]

Then $a^{\tiny\textcircled{D}}=\left(
\begin{array}{cccc}
0&1&0&0\\
1&0&0&0\\
0&0&0&0\\
0&0&0&0
\end{array}
\right)$ and $s:=d-ca^{\tiny\textcircled{D}}b=\left(
\begin{array}{cccc}
0&-1&1&-1\\
-1&0&-1&1\\
0&-1&0&-1\\
-1&0&-1&0
\end{array}
\right)$.

We compute that $$s^{-1} = \begin{pmatrix}
-1 & 0 & 1 & -1 \\
0 & -1 & -1 & 1 \\
1 & 0 & -1 & 0 \\
0 & 1 & 0 & -1
\end{pmatrix} $$

Moreover, we verify that $$\begin{array}{rll}
a^{\pi}b&=&\begin{pmatrix}
0 & 0 & 0 & 0 \\
0 & 0 & 0 & 0 \\
0 & 0 & 1 & 0 \\
0 & 0 & 0 & 1
\end{pmatrix}\begin{pmatrix}
1 & 0 & 1 & 0 \\
0 & 1 & 0 & 1 \\
0 & 0 & 0 & 0 \\
0 & 0 & 0 & 0
\end{pmatrix}=0, \\
ca^{\tau}&=&\begin{pmatrix}
1 & 0 & 0 & 0 \\
0 & 1 & 0 & 0 \\
1 & 0 & 0 & 0 \\
0 & 1 & 0 & 0
\end{pmatrix}\begin{pmatrix}
0 & 0 & 0 & 0 \\
0 & 0 & 0 & 0 \\
0 & 0 & 1 & 0 \\
0 & 0 & 0 & 1
\end{pmatrix}=0.
\end{array}$$

By virtue of Corollary 3.12, we obtain $$\begin{array}{rll}
x^{\tiny\textcircled{D}}&=&\left(
\begin{array}{cc}
a^{\tiny\textcircled{D}}+a^{\tiny\textcircled{D}}bs^{-1}ca^{\tiny\textcircled{D}}&-a^{\tiny\textcircled{D}}bs^{-1}\\
-s^{-1}ca^{\tiny\textcircled{D}}&s^{-1}
\end{array}
\right)\\
&=&\left(
\begin{array}{cccc|cccc}
0 & 0 & 0 & 0 & 0 & 0 & 1 & 0 \\
0 & 0 & 0 & 0 & 0 & 0 & 0 & 1 \\
0 & 0 & 0 & 0 & 0 & 0 & 0 & 0 \\
0 & 0 & 0 & 0 & 0 & 0 & 0 & 0 \\
\hline
1 & 0 & 0 & 0 & -1 & 0 & 1 & -1 \\
0 & 1 & 0 & 0 & 0 & -1 & -1 & 1 \\
0 & 0 & 0 & 0 & 1 & 0 & -1 & 0 \\
0 & 0 & 0 & 0 & 0 & 1 & 0 & -1
\end{array}
\right).
\end{array}$$ \\

Let $H$ be a Hilbert space. Then the algebra $\mathcal{B}(H)$ of bounded linear operators on $H$ is a Banach *-algebra with the adjoint as its involution. The space ${\ell}^2({\Bbb N})$ is a Hilbert space consisting of all square-summable infinite sequences of complex numbers. To illustrate Theorem 3.11, we present an block operator matrix that has a generalized core inverse.

\begin{exam}\end{exam} Let $\mathcal{H} = \ell^2(\mathbb{N})$ with the standard orthonormal basis $\{e_n\}_{n=1}^\infty$. Define the bounded linear operator $a: \mathcal{H} \to \mathcal{H}$ by:
\begin{align*}
    a e_1 &= e_1, \\
    a e_2 &= e_1, \\
    a e_n &= \frac{1}{n-2} e_n, \quad n \ge 3.
\end{align*}
Equivalently, $a$ can be written as a block diagonal operator:
\[ a = \begin{pmatrix} 1 & 1 \\ 0 & 0 \end{pmatrix} \oplus \operatorname{diag}\left(1, \frac{1}{2}, \frac{1}{3}, \dots\right). \]

The core inverse of $a$, denoted $x = a^{\tiny\textcircled{\#}}$, is defined by:
\begin{align*}
    x e_1 &= e_1, \\
    x e_2 &= 0, \\
    x e_n &= (n-2) e_n, \quad n \ge 3.
\end{align*}
Equivalently:
\[ x = \begin{pmatrix} 1 & 0 \\ 0 & 0 \end{pmatrix} \oplus \operatorname{diag}(1, 2, 3, \dots). \]

We now verify the three core inverse conditions:

$(1)$ For $n \ge 3$, $xa^2 e_n = x\left(\frac{1}{(n-2)^2} e_n\right) = \frac{1}{n-2} e_n = a e_n$. \\
For the basis vectors $e_1$ and $e_2$: $xa^2 e_1 = x(e_1) = e_1 = a e_1$ and $xa^2 e_2 = x(e_1) = e_1 = a e_2$. Thus, $xa^2 = a$.

$(2)$ For $n \ge 3$, $ax^2 e_n = a\left((n-2)^2 e_n\right) = (n-2) e_n = x e_n$. \\
For the other basis vectors: $ax^2 e_1 = a(e_1) = e_1 = x e_1$ and $ax^2 e_2 = a(0) = 0 = x e_2$. Thus, $ax^2 = x$.

$(3)$ We compute the action of $ax$: $ax e_1 = e_1$, $ax e_2 = 0$, and $ax e_n = e_n$ for $n \ge 3$. \\
Therefore, $ax$ is exactly the orthogonal projection. Since orthogonal projections are self-adjoint, $(ax)^* = ax$.

Let $S$ be the unilateral shift ($Se_n = e_{n+1}$). Let
\begin{align*}
b &= S^2 + e_1 \otimes e_1, \\
c &= S^* + e_3 \otimes e_1.
\end{align*}
where $S^* e_1 = 0$, $S^* e_n = e_{n-1}$ ($n \ge 2$), and $u \otimes v$ denotes the rank-1 operator $w \mapsto \langle w, v \rangle u$.
Explicitly on the basis:
\begin{align*}
b e_1 &= e_1, & b e_2 &= e_4, & b e_n &= e_{n+2}, \quad n \ge 3, \\
c e_1 &= e_3, & c e_2 &= 0, & c e_n &= e_{n-1}, \quad n \ge 3.
\end{align*}

Then we verify the following equalities:
\begin{align*}
(I - a a^{\tiny\textcircled{\#}}) b &= 0, \\
c (I - a a^{\tiny\textcircled{\#}}) &= 0.
\end{align*}

Choose $d=1-ca^{\tiny\textcircled{\#}}b$. Thus the generalized Schur complement $s = d - c a^{\tiny\textcircled{\#}} b = I$, the identity operator, which is obviously core invertible, with $s^{\tiny\textcircled{\#}} = I$ and $ss^{\tiny\textcircled{\#}} = I$.
Since $s = I$, we have $s s^{\tiny\textcircled{\#}} = I$, so $I - s s^{\tiny\textcircled{\#}} = 0$, hence $(I - s s^{\tiny\textcircled{\#}}) c = 0$ and $b (I - s s^{\tiny\textcircled{\#}}) = 0$.

We consider the $2\times2$ operator matrix
\[
M = \begin{pmatrix} a & b \\ c & d \end{pmatrix} \in \mathcal{B}(\mathcal{H} \oplus \mathcal{H})
\]
with the generalized Schur complement $s = d - c a^{\tiny\textcircled{\#}} b$. By virtue of Corollary 3.7, the operator matrix $M$ has a core invertible, with core inverse given by the block formula:
\[
M^{\tiny\textcircled{\#}}= \begin{pmatrix}
a^{\tiny\textcircled{\#}} + a^{\tiny\textcircled{\#}} bc a^{\tiny\textcircled{\#}} & -a^{\tiny\textcircled{\#}} b \\
-c a^{\tiny\textcircled{\#}} & I
\end{pmatrix}.
\]\\

\begin{exam}\end{exam} Let the $4 \times 4$ matrix $x$ is given by:
\[
x = \left(
\begin{array}{cc}
a & b \\
c & d
\end{array}
\right) = \left(
\begin{array}{cc|cc}
2 & 0 & 1 & 2 \\
0 & 0 & 0 & 0 \\
\hline
1 & i & 3/2 & 2 \\
0 & 0 & 1 & 2
\end{array}
\right).
\]

Since $a$ is a diagonal matrix with $\operatorname{rank}(a)=1$, its group inverse and Moore-Penrose inverse are easily obtained:
\[
a^{\#} = \begin{pmatrix} 1/2 & 0 \\ 0 & 0 \end{pmatrix}, \quad a^{\dag} = \begin{pmatrix} 1/2 & 0 \\ 0 & 0 \end{pmatrix}
\]
The core inverse $a^{\tiny\textcircled{\#}}$ is given by $a^{\#}aa^{\dag}$:
\[
a^{\tiny\textcircled{\#}} = \begin{pmatrix} 1/2 & 0 \\ 0 & 0 \end{pmatrix} \begin{pmatrix} 2 & 0 \\ 0 & 0 \end{pmatrix} \begin{pmatrix} 1/2 & 0 \\ 0 & 0 \end{pmatrix} = \begin{pmatrix} 1/2 & 0 \\ 0 & 0 \end{pmatrix} = a^{\#}
\]

Compute $aa^{\#}$:
\[
aa^{\#} = \begin{pmatrix} 2 & 0 \\ 0 & 0 \end{pmatrix} \begin{pmatrix} 1/2 & 0 \\ 0 & 0 \end{pmatrix} = \begin{pmatrix} 1 & 0 \\ 0 & 0 \end{pmatrix}
\]
Compute $I_2 - aa^{\#}$:
\[
I_2 - aa^{\#} = \begin{pmatrix} 1 & 0 \\ 0 & 1 \end{pmatrix} - \begin{pmatrix} 1 & 0 \\ 0 & 0 \end{pmatrix} = \begin{pmatrix} 0 & 0 \\ 0 & 1 \end{pmatrix}
\]
Multiply by $b$, we derive
\[
(I_2 - aa^{\#})b = \begin{pmatrix} 0 & 0 \\ 0 & 1 \end{pmatrix} \begin{pmatrix} 1 & 2 \\ 0 & 0 \end{pmatrix} = \begin{pmatrix} 0 & 0 \\ 0 & 0 \end{pmatrix} = 0
\]

Compute $ca^{\tiny\textcircled{\#}}b$:
\[
ca^{\tiny\textcircled{\#}}b = \begin{pmatrix} 1 & i \\ 0 & 0 \end{pmatrix} \begin{pmatrix} 1/2 & 0 \\ 0 & 0 \end{pmatrix} \begin{pmatrix} 1 & 2 \\ 0 & 0 \end{pmatrix} = \begin{pmatrix} 1/2 & 0 \\ 0 & 0 \end{pmatrix} \begin{pmatrix} 1 & 2 \\ 0 & 0 \end{pmatrix} = \begin{pmatrix} 1/2 & 1 \\ 0 & 0 \end{pmatrix}
\]
Compute $s = d - ca^{\tiny\textcircled{\#}}b$:
\[
s = \begin{pmatrix} 3/2 & 2 \\ 1 & 2 \end{pmatrix} - \begin{pmatrix} 1/2 & 1 \\ 0 & 0 \end{pmatrix} = \begin{pmatrix} 1 & 1 \\ 1 & 2 \end{pmatrix}
\]
The matrix $s$ is invertible since its determinant is $\det(s) = 2 - 1 = 1 \neq 0$. For an invertible matrix, the Drazin inverse is exactly its standard inverse, i.e., $s^D = s^{-1}$.
\[
s^D = s^{-1} = \begin{pmatrix} 2 & -1 \\ -1 & 1 \end{pmatrix}
\]

We easily check the condition: $(I_2 - ss^D)ca = 0$. By virtue of Theorem 3.11,
$$\begin{array}{rll}
x^{\tiny\textcircled{\#}}&=&\left(
\begin{array}{cc}
a^{\tiny\textcircled{\#}}+a^{\tiny\textcircled{\#}}bs^{-1}ca^{-1}&-a^{\tiny\textcircled{\#}}bs^{-1}\\
-s^{-1}ca^{\tiny\textcircled{\#}}&s^{-1}\end{array}
\right)\\
&=&\left(
\begin{array}{cc|cc}
1/2 & 0 & 0 & -1/2 \\
0 & 0 & 0 & 0 \\
\hline
-1 & 0 & 2 & -1 \\
1/2 & 0 & -1 & 1\\
\end{array}
\right).
\end{array}$$
Evidently, we verify that $x^{\tiny\textcircled{\#}}=x(x^{\tiny\textcircled{\#}})^2, (xx^{\tiny\textcircled{\#}})^*=xx^{\tiny\textcircled{\#}}$ and $x=x^{\tiny\textcircled{\#}}x^2$.

\end{document}